\documentclass{amsart}
\usepackage{amsmath,amsthm,amsfonts,amssymb,color,graphicx,appendix,ulem}
\usepackage{comment}
\usepackage{hyperref}

\usepackage{tikz,pgfplots}
\pgfplotsset{width=8.5cm,compat=1.12}
%\usepgfplotslibrary{external}
%\tikzexternalize

\setlength{\oddsidemargin}{0.5cm}
\setlength{\evensidemargin}{0.5cm}
\setlength{\topmargin}{0.25cm}
\setlength{\textheight}{21.5cm}
\setlength{\textwidth}{15.25cm}

\numberwithin{equation}{section}
\theoremstyle{plain}
\newtheorem{theorem}{\sc Theorem}[section]

\newtheorem{corollary}[theorem]{\sc Corollary}
\newtheorem{definition}[theorem]{\sc Definition}
\newtheorem{lemma}[theorem]{\sc Lemma}
\newtheorem{proposition}[theorem]{\sc Proposition}

\theoremstyle{remark}
\newtheorem{remark}[theorem]{\sc Remark}
\newtheorem{example}[theorem]{\sc Example}

\newcommand{\one}{{{\rm 1\mkern-1.5mu}\!{\rm I}}}
\newcommand{\be}{\begin{equation}}
\newcommand{\ee}{\end{equation}}

\newcommand{\sign}{\mathrm{sign}}

%%generic definitions 

\def\cF{\mathcal{F}}
\def\cG{\mathcal{G}}

\def\cP{\mathcal{P}}

\def\aeL{\alpha^{(x_*,h,y)}}
\def\aleft{\overleftarrow\alpha}
\def\aright{\overrightarrow\alpha}

%from Elena

\renewcommand{\epsilon}{\varepsilon}
\renewcommand{\d}{\partial}
\newcommand{\essinf}{\mathrm{ess\,inf\,}}
\newcommand{\esssup}{\mathrm{ess\,sup\,}}

\begin{document}
	
\title[Homogenization of a class of 1-D nonconvex viscous HJ equations]{Homogenization of a class of one-dimensional nonconvex viscous Hamilton-Jacobi equations with random potential}

\author[E.\ Kosygina]{Elena Kosygina}
\address{Elena Kosygina\\ Department of Mathematics\\ Baruch College\\  One Bernard Baruch Way\\ Box B6-230, New York, NY 10010\\ USA}
\email{elena.kosygina@baruch.cuny.edu}
\urladdr{http://www.baruch.cuny.edu/math/elenak/}
\thanks{E.\ Kosygina was partially supported by the Simons Foundation (Award \#523625)}

\author[A.\ Yilmaz]{Atilla Yilmaz}
\address{Atilla Yilmaz\\ Department of Mathematics\\ Ko\c{c} University\\ Sar\i yer, Istanbul 34450, Turkey\\ and Courant Institute\\ 251 Mercer Street\\ New York, NY 10012\\ USA}
\email{yilmaz@cims.nyu.edu}
\urladdr{http://cims.nyu.edu/$\sim$yilmaz/}
\thanks{A.\ Yilmaz was partially supported by the BAGEP Award of the Science Academy, Turkey.}

\author[O.\ Zeitouni]{Ofer Zeitouni}
\address{Ofer Zeitouni\\ Faculty of Mathematics\\ Weizmann Institute \\ POB 26, Rehovot 76100\\ Israel\\and Courant Institute\\ 251 Mercer Street\\ New York, NY 10012\\ USA}
\email{ofer.zeitouni@weizmann.ac.il}
\urladdr{http://wisdom.weizmann.ac.il/$\sim$zeitouni/}
\thanks{O.\ Zeitouni was partially supported by an Israel Science Foundation grant.} 
	
\date{October 8, 2017.}

\subjclass[2010]{35B27, 60K37, 93E20.} % 35B27 Homogenization; 60K37 Processes in random environments; 93E20 Optimal stochastic control.
\keywords{Hamilton-Jacobi, homogenization, correctors, Brownian motion in a random potential, large deviations, tilted free energy, risk-sensitive stochastic optimal control, asymptotically optimal policy, bang-bang.}

\begin{abstract}
    We prove the homogenization of a class of one-dimensional viscous Hamilton-Jacobi equations with random Hamiltonians that are nonconvex in the gradient variable.
    Due to the special form of the Hamiltonians, the solutions of these PDEs with linear initial conditions have representations involving exponential expectations of controlled Brownian motion in a random potential. 
    The effective Hamiltonian is the asymptotic rate of growth of these exponential expectations as time goes to infinity and is explicit in terms of the tilted free energy of (uncontrolled) Brownian motion in a random potential. 
    The proof involves large deviations, construction of correctors which lead to exponential martingales, and identification of asymptotically optimal policies.
\end{abstract}

\maketitle

\section{Introduction}\label{sec:intro}

\subsection{Main results}\label{subsec:main}

Let $(\Omega,{\mathcal F},\mathbb{P})$ be a
probability space and $\{T_y:\Omega\to \Omega\}_{y\in\mathbb{R}}$ a
group of measure preserving transformations with
$T_0=\text{Id},\ T_{x+y}=T_xT_y$.  Furthermore, assume that the group
action is ergodic, that is every set which is invariant under all
$T_y,\ y\in\mathbb{R}$, has measure $0$ or $1$.

We are interested in the behavior as $\epsilon\to 0$ of a family of
solutions $u^\epsilon(t,x,\omega)$, $\epsilon>0$, to the Cauchy problem
\begin{align}\label{eq}
	\frac{\d u^\epsilon}{\d t}=\frac{\epsilon}{2}\frac{\d^2 u^\epsilon}{\d x^2}+H_{\beta,c}\left(\frac{\d u^\epsilon}{\d x},T_{x/\epsilon}\,\omega\right)&,\quad (t,x)\in(0,\infty)\times\mathbb{R},\\\label{ic} u^\epsilon\Big|_{t=0}=g(x)&,\quad x\in\mathbb{R},
\end{align}
where $g$ is in $\text{UC}(\mathbb{R})$, the space of uniformly continuous
functions, and the Hamiltonian
\begin{equation*}\label{H}
	H_{\beta,c}:\mathbb{R}\times \Omega\to \mathbb{R},\quad H_{\beta,c}(p,\omega)=\frac12\,p^2-c|p|+\beta V(\omega),
\end{equation*}
depends on two constant parameters $c\ge 0$ and $\beta>0$. An
important feature of this Hamiltonian is that for all $\beta,c>0$ it
is nonconvex (and not level-set convex) in $p$. The random
environment enters $H_{\beta,c}$ and \eqref{eq} through the potential
$V\in L^\infty(\Omega,{\cF},\mathbb{P})$ and its shifts
$V(T_{x/\epsilon}\omega),\ x\in\mathbb{R}$. We shall assume without
further loss of generality that
\begin{equation}\label{ass:wlog}
	\essinf V(\omega)=0\quad\text{and}\quad \esssup V(\omega)=1.
\end{equation}
The parameter $\beta$ is then just the ``magnitude'' of the
potential. We shall also suppose that $\forall \omega\in\Omega$
\begin{equation}\label{ass:vreg}
	x\mapsto V(T_x\omega)\ \text{is in $C_b^1(\mathbb{R})$}.
\end{equation}
Here and throughout, $C^k$ (resp.\ $C_b^k$), $k=1,2$, refers to the set of functions that are $k$ times differentiable with continuous (resp.\ continuous and uniformly bounded) derivatives up to order $k$ inclusively. The above conditions guarantee that the Cauchy problem \eqref{eq}-\eqref{ic} has a unique viscosity solution in $\text{UC}([0,\infty)\times\mathbb{R})$. %the set of uniformly continuous functions
See Section \ref{subsec:eu} for references.

To state our last assumption on $V$ we need the following definition.
\begin{definition}
	\label{def:vh}
	For any $\omega\in\Omega$ and $h\in(0,1)$, an interval $I$ is said
	to be an $h$-valley (resp.\ $h$-hill) if $V(T_x\omega) \le h$
	(resp.\ $V(T_x\omega) \ge h$) for every $x\in I$.
\end{definition}
We shall assume that for every $h\in(0,1)$ and $y>0$
\begin{equation}
	\mathbb{P}(\text{$[0,y]$ is an $h$-valley}) > 0\ \ \text{and}\ \ \mathbb{P}(\text{$[0,y]$ is an $h$-hill}) > 0.\label{ass:vh}
\end{equation}
Condition \eqref{ass:vh} ensures (for $\mathbb{P}$-a.e.\ $\omega$ by
the ergodicity assumption) the existence of arbitrarily long intervals
where the potential is uniformly close to its ``extremes''.

We prove the following homogenization result.
\begin{theorem}\label{thm:main}
	Assume that $V$ satisfies \eqref{ass:wlog}, \eqref{ass:vreg} and \eqref{ass:vh}.  For every $\theta\in\mathbb{R}$, let $u^\epsilon_\theta$
	be the unique viscosity solution of the Cauchy problem \eqref{eq}-\eqref{ic} in $\text{UC}([0,\infty)\times\mathbb{R})$
	with $g(x)=\theta x$. Then
	\[\mathbb{P}\left(\forall R>0\ \forall T>0\ \lim_{\epsilon\to
		0}\max_{|x|\le R}\max_{t\le
		T}|u^\epsilon_\theta(t,x,\omega)-\overline{H}_{\beta,c}(\theta)t-\theta
	x|=0\right)=1, \]
	where a continuous function $\overline{H}_{\beta,c}$, the effective
	Hamiltonian, is given explicitly in terms of the (non-explicit)
	tilted free energy of a Brownian motion in the potential $\beta V$
	(see \eqref{eq:tiltedfe}, \eqref{eq:weaklimit} and \eqref{eq:stronglimit}).
\end{theorem}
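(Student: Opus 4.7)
\emph{Plan of proof.}

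The plan is to represent $u^\epsilon_\theta$ as a risk-sensitive stochastic control problem, reduce the limit $\epsilon\to 0$ to a long-time asymptotic question for the tilted free energy of controlled Brownian motion in the random potential $\beta V$, identify that limit via a corrector/exponential martingale pair together with an asymptotically optimal bang--bang policy, and finally propagate the resulting a.s.\ pointwise convergence to uniform convergence on compacts using stationarity and ergodicity.

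First, the Hopf--Cole transform $w^\epsilon=\exp(u^\epsilon_\theta/\epsilon)$ converts \eqref{eq} into the semilinear equation
\[
\partial_t w^\epsilon=\tfrac\epsilon 2\partial_{xx}w^\epsilon-c|\partial_x w^\epsilon|+\epsilon^{-1}\beta V(T_{x/\epsilon}\omega)\,w^\epsilon,
\]
which, via the identity $-c|p|=\min_{\alpha\in\{-c,c\}}\alpha p$, is a risk-sensitive HJB equation with bang--bang control. Its verification yields
\[
u^\epsilon_\theta(t,x,\omega)=\inf_{\alpha(\cdot)}\epsilon\log E\!\left[\exp\!\Big(\tfrac{\beta}{\epsilon}\!\int_0^t\!V(T_{X_s/\epsilon}\,\omega)\,ds+\tfrac{\theta}{\epsilon}X_t\Big)\right],\quad dX_s=\alpha_s\,ds+\sqrt\epsilon\,dB_s,\ X_0=x,
\]
where $\alpha$ ranges over progressively measurable $\{-c,c\}$-valued controls. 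A Brownian scaling $r=s/\epsilon$, $T=t/\epsilon$ then rewrites this as
\[
\frac{u^\epsilon_\theta(t,x,\omega)-\theta x}{t}=\inf_{\tilde\alpha}\tfrac 1 T\log E\!\left[\exp\!\Big(\beta\!\int_0^T\!V(T_{x/\epsilon+\tilde X_r}\,\omega)\,dr+\theta\tilde X_T\Big)\right],\qquad \tilde X_r=\!\int_0^r\!\tilde\alpha_u\,du+\tilde B_r,
\]
reducing the homogenization to showing that the right-hand side converges a.s.\ as $T\to\infty$ to a deterministic continuous function $\overline H_{\beta,c}(\theta)$, independent of the starting point.

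The plan for the long-time limit is three-pronged. Plugging in the constant controls $\tilde\alpha\equiv\sigma c$, $\sigma\in\{\pm 1\}$, produces the upper bound $\overline H_{\beta,c}(\theta)\le\min_\sigma\Lambda_\sigma(\theta)$, where $\Lambda_\sigma(\theta)$ is the $\theta$-tilted free energy of BM with drift $\sigma c$ in the potential $\beta V$; its existence as a deterministic a.s.\ limit follows from a subadditive ergodic argument on $T\mapsto\log E[\cdots]$ at integer times. For the matching lower bound, the key is to construct for each $\sigma$ a sublinear \emph{corrector} $\chi_{\theta,\sigma}(\,\cdot\,,\omega)$ via a Perron-type eigenvalue problem on long intervals for the tilted operator $\tfrac12\partial_{xx}+\sigma c\partial_x+\beta V$, such that $\exp(\chi_{\theta,\sigma}(\tilde X_t,\omega)+\theta\tilde X_t+\beta\!\int_0^t V-\Lambda_\sigma(\theta)\,t)$ is a true exponential martingale under the constant drift $\sigma c$; the sublinearity of $\chi_{\theta,\sigma}$ then transfers this martingale identity to the matching asymptotic lower bound. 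The third step is to realize the envelope $\min_\sigma\Lambda_\sigma$ with explicit piecewise constant bang--bang policies that steer $\tilde X$ into arbitrarily long valleys or hills of $V$, whose existence is precisely the content of \eqref{ass:vh}, yielding $\overline H_{\beta,c}(\theta)=\min_\sigma\Lambda_\sigma(\theta)$ along with continuity in $\theta$ (the two sign regimes of the optimal control meet continuously at $\theta=0$).

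Finally, to upgrade the pointwise a.s.\ convergence $u^\epsilon_\theta(t,0,\omega)/t\to\overline H_{\beta,c}(\theta)$ to the uniform convergence claimed in the theorem, the plan is to combine the stationarity identity $u^\epsilon_\theta(t,x,\omega)=\theta x+u^\epsilon_\theta(t,0,T_{x/\epsilon}\omega)$ with $\epsilon$-uniform Lipschitz estimates on $t\mapsto u^\epsilon_\theta(t,0,\omega)$ from standard comparison arguments, and with the ergodicity of $\{T_y\}$, so that a single null set can be chosen to handle all shifts $T_{x/\epsilon}\omega$ simultaneously. The main obstacle will be the middle step: constructing $\chi_{\theta,\sigma}$ in a noncompact random setting, which requires a delicate finite-volume Perron approximation absent in the periodic case, and verifying that the bang--bang policies built from the valleys and hills of $V$ are asymptotically optimal. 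The nonconvex structure of $H_{\beta,c}$, expressed in the $\min$ over the branches $\sigma=\pm 1$, forces separate treatment of the two regimes, and assumption \eqref{ass:vh} is exactly what supplies the long hills and valleys needed to implement the matching policies and to continuously glue the two regimes at $\theta=0$.
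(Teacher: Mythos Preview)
Your identification of the effective Hamiltonian as $\overline H_{\beta,c}(\theta)=\min_\sigma\Lambda_\sigma(\theta)=\min\{\Lambda_\beta(\theta-c),\Lambda_\beta(\theta+c)\}-\tfrac12 c^2$ is incorrect, and this is the central gap in the proposal. The constant policies $\tilde\alpha\equiv\pm c$ do yield that quantity as an upper bound, but it is \emph{not} the value of $\overline H_{\beta,c}$ for $|\theta|$ small. There a strictly better upper bound $(\beta-\tfrac12 c^2)^+$ is achieved by a \emph{third} family of bang--bang policies, $\alpha^{(x_*,h,y)}_s=-c\,\sign(X^\alpha_s-x_*)$, which drive the particle toward the center $x_*$ of a long $h$-valley of $V$; these are what exploit the valley half of assumption \eqref{ass:vh} and are responsible for the flat piece of $\overline H_{\beta,c}$ in \eqref{eq:weaklimit}--\eqref{eq:stronglimit}. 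Concretely, when $\beta<c^2/2$ one has $\overline H_{\beta,c}(0)=0$, whereas your formula gives $\Lambda_\beta(c)-\tfrac12 c^2>0$. Your ``third step'' also conflates the roles of hills and valleys: the hill half of \eqref{ass:vh} supplies the uniform \emph{lower} bound $\beta-\tfrac12 c^2$ valid for every control (see \eqref{eq:CLB1}), while the valley half supplies the trapping \emph{upper} bound \eqref{eq:CUB2}. The two branches do not simply ``meet continuously at $\theta=0$''; there is an entire interval on which the trapping policy is asymptotically optimal and the constant policies are not.

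Your lower-bound mechanism is also incomplete as stated. That the corrector $\chi_{\theta,\sigma}$ produces an exponential martingale under the \emph{particular} constant drift $\sigma c$ does not by itself bound the infimum over \emph{all} admissible policies from below. The paper's device is to show that the same exponential is a \emph{submartingale} under every $\alpha\in\cP_c$ (see \eqref{eq:submartan} and the It\^o computation that follows); this hinges on the pointwise sign inequality $\theta+(F_{\beta,\theta-c})'(\omega,x)\ge 0$ of Lemma~\ref{lem:aciktimm}, which holds only under condition (i) or (ii) there and in particular fails on the interval $0\le\theta<\bar\theta(\beta,c)$ in the strong-control regime. On that remaining interval the lower bound $0$ is obtained by a separate monotonicity-in-$\beta$ scaling argument (the chain \eqref{eq:CLB4}). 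Without the trapping upper bound, the submartingale step for arbitrary controls, and this scaling step, your upper and lower bounds do not match and the formula you propose cannot be established.
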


\begin{example}\label{ex:levy}
	Let $f\ge0$ be a $C_b^1(\mathbb{R})$ function with compact support such that $\int_{-\infty}^{\infty}f(x)dx = 1$. Define \[\omega_x = \int_{-\infty}^\infty f(x-y)g(L_y - L_{y-1})dy\] where $L$ is a standard two-sided Poisson or Wiener process with $L_0 = 0$ and $g(a) = (a\vee 0)\wedge 1$. Set $\omega = (\omega_x)_{x\in\mathbb{R}}$ and denote the induced probability space by $(\Omega,\mathcal{F},\mathbb{P})$. Define $T_x$ naturally by $(T_x\omega)_y = \omega_{x+y}$ and let $V(\omega) = \omega_0$.
\end{example}

We leave it to the reader to check that Example \ref{ex:levy} falls
within our model and satisfies conditions
\eqref{ass:wlog}-\eqref{ass:vh}. The potential in this example has a finite range of dependence.
However, our assumptions in general do not imply that the potential is even
weakly mixing. In the discrete setting, this was shown in \cite[Example 1.3]{YZ17} and the argument carries over easily
to the continuous setting.

Note that the function
$\overline{u}_\theta(t,x)=\overline{H}_{\beta,c}(\theta)t+\theta x$
satisfies the equation
\begin{equation}
	\label{eqh}
	\frac{\d \overline{u}}{\d t}=\overline{H}_{\beta,c}\left(\frac{\d \overline{u}}{\d x}\right),\quad (t,x)\in(0,\infty)\times\mathbb{R},
\end{equation}
and the initial condition $\overline{u}(0,x)=\theta x$. %Since $\overline{H}_{\beta,c}$ is continuous, $\overline{u}_\theta$
It is the unique viscosity solution of this Cauchy problem in $\text{UC}([0,\infty)\times\mathbb{R})$, see Section \ref{subsec:eu}.
%the class of continuous functions that are uniformly continuous in $x$ locally uniformly in $t$, see, for example, \cite{CL86}.
Moreover, $\overline{H}_{\beta,c}(\theta) = \overline{u}_\theta(1,0) = \lim_{\epsilon\to 0}u^\epsilon_\theta(1,0,\omega)$.
Using general results from \cite{DK17} we deduce the
following corollary.
\begin{corollary}\label{cor:main}
	Assume that $V$ satisfies the conditions in Theorem~\ref{thm:main}. For
	every $g\in\text{UC}(\mathbb{R})$, let $u^\epsilon_g(t,x,\omega)$
	be the unique viscosity solution of \eqref{eq} in
	$\text{UC}([0,\infty)\times\mathbb{R})$ with the initial condition
	$g$. Then
	\[\mathbb{P}\left(\forall g\in\text{UC}(\mathbb{R})\ \forall R>0\
	\forall T>0\ \lim_{\epsilon\to 0}\max_{|x|\le R}\max_{t\le
		T}|u^\epsilon_g(t,x,\omega)-\overline{u}_g(t,x)|=0\right)=1,\]
	where $\overline{u}_g$ is the unique viscosity solution of
	\eqref{eqh} in $\text{UC}([0,\infty)\times\mathbb{R})$ with the initial condition $g$.
\end{corollary}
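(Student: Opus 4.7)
The plan is to apply the general stability machinery of \cite{DK17}, which upgrades almost-sure convergence of $u^\epsilon_\theta$ for affine initial data (Theorem \ref{thm:main}) to almost-sure convergence of $u^\epsilon_g$ for arbitrary $g\in\text{UC}(\mathbb{R})$. The first step is to fix a countable dense subset $D\subset\mathbb{R}$ and intersect the full-measure events from Theorem \ref{thm:main} over $\theta\in D$ to obtain one event $\Omega_0$ of probability one on which the conclusion of the theorem holds simultaneously for every $\theta\in D$. The asserted continuity of $\overline{H}_{\beta,c}$, together with the comparison principle for \eqref{eq} applied to pairs $\theta_1 x$, $\theta_2 x$ of affine initial conditions (whose $\epsilon$-solutions differ by at most $|\theta_1-\theta_2|\cdot|x|$ plus a uniformly controlled error coming from the quadratic term in $H_{\beta,c}$), extends the convergence on $\Omega_0$ from $D$ to every $\theta\in\mathbb{R}$.

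The second step is to work on $\Omega_0$ with the Barles--Perthame half-relaxed upper and lower limits $\overline{u}$ and $\underline{u}$ of $u^\epsilon_g$ as $\epsilon\to 0$. Uniform-in-$\epsilon$ continuity estimates on $u^\epsilon_g$, inherited from the UC modulus of $g$ via the comparison principle for \eqref{eq}, guarantee that $\overline{u}$ and $\underline{u}$ are finite and continuous. Standard stability of viscosity solutions, using the affine convergence established in step one to identify the limiting Hamiltonian through Evans' perturbed test-function method, then shows that $\overline{u}$ is a viscosity subsolution and $\underline{u}$ a supersolution of the effective equation \eqref{eqh}. The comparison principle for \eqref{eqh} (available since $\overline{H}_{\beta,c}$ is continuous) reduces the proof to checking $\overline{u}(0,\cdot)=\underline{u}(0,\cdot)=g$ pointwise.

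This last verification, which I expect to be the main obstacle, is carried out by local affine approximation: at each $x_0\in\mathbb{R}$ and each $\delta>0$, the uniform continuity of $g$ yields an affine function $\ell$ such that $\ell-\delta\le g\le\ell+\delta$ on a neighbourhood $I$ of $x_0$. One then sandwiches $u^\epsilon_g$ between $u^\epsilon$-solutions with initial data $\ell\pm\delta$, augmented by barriers that penalize exit from $I$ and that, since \eqref{eq} is second-order, must be constructed explicitly rather than invoked via finite propagation speed. Passing $\epsilon\to 0$ on $\Omega_0$ via the affine convergence produces $g(x_0)$ up to $O(\delta)$, and sending $\delta\to 0$ closes the argument. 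The technical packaging of these sandwich/barrier steps in the random setting is exactly what the general results in \cite{DK17} provide, which is why the corollary follows essentially for free once Theorem \ref{thm:main} and the continuity of $\overline{H}_{\beta,c}$ are in hand.
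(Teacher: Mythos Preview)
Your proposal is correct and takes essentially the same approach as the paper: both reduce the corollary to the general machinery of \cite{DK17}, with Theorem~\ref{thm:main} supplying the affine-data input. The paper's proof is simply a two-line citation of \cite[Theorem 3.1]{DK17}, verifying its structural hypotheses on $H_{\beta,c}$ (coercivity, Lipschitz regularity in $x$ and $p$, cf.\ \eqref{eq:coercive}--\eqref{eq:ucuncu}) together with the condition (L) there via the Lipschitz estimate of Lemma~\ref{lem:lip}; your write-up is effectively an informal sketch of what that theorem does internally.
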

Thus, we obtain a full homogenization result for a new class of
viscous Hamilton-Jacobi equations with nonconvex (when $c> 0$)
Hamiltonians in dimension 1.

The solution of \eqref{eq}-\eqref{ic} rewritten as a terminal value
problem is also known to characterize the value of a two-player,
zero-sum stochastic differential game (see, for instance,
\cite{FlS89}). But due to the special form of \eqref{eq}, our
homogenization problem admits a simple and useful control
interpretation, where, roughly speaking, the role of one of the players is
implicitly assumed by the diffusion in the random environment. More
precisely, let $(X_t)_{t\ge 0}$ be a standard Brownian motion (BM) that is independent of the environment. We denote by
$(\mathcal{G}_t)_{t\ge 0}$ the natural filtration and by $P_0$ (resp.\ $E_0$) the probability (resp.\ expectation) corresponding to this BM when $X_0=0$. By the Hopf-Cole
transformation and scaling, i.e., setting
$u^\epsilon_\theta(t,x,\omega)=\epsilon\log
v_\theta(t/\epsilon,x/\epsilon,\omega)$,
we get that $v_\theta(t,x,\omega)$ satisfies the
$\epsilon$-independent equation
%\begin{equation}\label{eq:epind}
\[\frac{\d v_\theta}{\d t}=\frac12\frac{\d^2 v_\theta}{\d
	x^2}-c\left|\frac{\d v_\theta}{\d x}\right|+\beta
V(T_x\omega)v_\theta,\quad (t,x)\in(0,\infty)\times\mathbb{R},\]
%\end{equation}
and the initial condition $v_\theta(0,x,\omega)=e^{\theta x}$. Taking for
simplicity $t=1$ and $x=0$ and using the control representation for
$v_\theta$ (see \eqref{eq:protoconrep}-\eqref{eq:controlrep}), the limiting behavior of $u^\epsilon_\theta(1,0,\omega)$
as $\epsilon\to 0$ boils down to showing the existence of the limit
(with $S=1/\epsilon$)
\begin{equation}\label{cr}
	\overline{H}_{\beta,c}(\theta)=\lim_{\epsilon\to 0}u^\epsilon_\theta(1,0,\omega)=\lim_{S\to\infty}\inf_{\alpha\in {\cP}_c}\frac{1}{S}\log E_0\left[e^{\beta \int_0^S V(T_{X^\alpha_s}\omega)\,ds+\theta X^\alpha_S}\right],
\end{equation}
where
\[{\cP}_c=\{\alpha=(\alpha_s)_{s\ge 0}: \ \alpha\ \text{is $[-c,c]$-valued and $\mathcal {G}_s$-progressively measurable}\}\]
is the set of admissible controls, and $(X^\alpha_s)_{s\ge 0}$ is
defined by
\begin{equation}\label{eq:SDEmiz}
X^\alpha_s=\int_0^s\alpha_r\,dr+X_s.
\end{equation}
The control interpretation \eqref{cr} indicates that, contrary to the
case $c=0$ for which the existence of the limit can be easily shown
by subadditivity arguments, allowing $c>0$ destroys
subadditivity. Hence, even the existence of the limit in \eqref{cr}
becomes a nontrivial statement.  We are able not only to
show that the limit in \eqref{cr} exists and get a semi-explicit
expression for it but also to provide asymptotically
optimal controls (see Section \ref{subsec:EffH}).

Recall that the original Hamiltonian
$H_{\beta,c}(p,\omega)$ is nonconvex in $p$ for all $\beta,c>0$. We
show that in our setting the convexity/nonconvexity of the effective
Hamiltonian depends only on the magnitude of the potential and the size of the
control. More precisely, $\overline{H}_{\beta,c}$ is
convex if and only if $\beta \ge c^2/2$ (see Figure \ref{matrixfigure}).
The ``convexification'' of the effective Hamiltonian has
been previously observed for the first order Hamilton-Jacobi equations
in \cite{ATY15}, \cite{ATY16}, \cite{QTY17+}.

\subsection{Broader context}

We shall refer to the equation of the form
\begin{equation}
	\label{geq}
	\frac{\d u^\epsilon}{\d
		t}=\frac{\epsilon}{2}\text{tr\,}(A(T_{x/\epsilon}\omega)D^2u^\epsilon)+H(Du^\epsilon,
	T_{x/\epsilon}\omega),\quad (t,x)\in(0,\infty)\times\mathbb{R}^d,
\end{equation}
as a viscous Hamilton-Jacobi equation if the symmetric positive semi-definite
matrix $A\not\equiv 0$ and as an inviscid Hamilton-Jacobi equation (or
simply Hamilton-Jacobi equation) if $A\equiv 0$.

By change of variables we can always write
$u^\epsilon(t,x,\omega)=\epsilon u(t/\epsilon,x/\epsilon,\omega)$
where $u(t,x,\omega)$ solves \eqref{geq} for $\epsilon=1$. Thus,
we are interested in the existence of a scaling limit under the
  hyperbolic scaling of time and space. Note that linear initial conditions are invariant under this scaling.
  We shall say that \eqref{geq} with initial condition $g(x)$ homogenizes if with probability
  one $u^\epsilon(t,x,\omega)$ converges locally
    uniformly in $t$ and $x$ to the solution $\overline u(t,x)$ of a
    deterministic PDE with the same initial condition. If the
  convergence is only in probability then we shall say that the
  problem homogenizes in probability.

  It has been shown that if the Hamiltonian $H(p,\omega)$ is convex in
  the momentum variables ($p\in\mathbb{R}^d$), then
  homogenization holds for very general viscous and inviscid
  Hamilton-Jacobi equations in all dimensions. The literature on the
  subject is vast. We shall focus primarily on the viscous case and
  refer the reader to \cite{LS05}, \cite{KRV06}, \cite{AS12},
  \cite{AT14}, \cite{AC15}, and the references therein.

If the Hamiltonian $H(p,x)$ is 1-periodic in each of the spatial
variables, then there is a general method of proving homogenization due
to \cite{LPV} (see \cite{E92} for an extension to general first
and second order fully nonlinear PDEs). The method is based on the
construction of correctors. Correctors are sublinear (at infinity)
solutions to a certain family of nonlinear eigenvalue problems, see
\eqref{eq:auxODE} for our case. The applicability of the method does not depend
on the convexity of the Hamiltonian but rather on the coercivity of the
Hamiltonian in $p$ and the compactness of the space ($x$ changes on a
torus). The method of correctors originated in the study of linear
partial differential equations with periodic coefficients, and we
refer the interested reader to the monographs \cite{BLP78}, \cite{JKO94}.

In the more general stochastic setting, \cite[Theorem 4.1]{RT00}
asserted that if sublinear correctors exist for each vector in
$\mathbb{R}^d$ then the (inviscid) stochastic problem
homogenizes. However, soon it was shown (\cite{LS03}) that in the
stochastic case correctors need not exist in general. Thus,
other methods were developed. One of them was introduced already
in \cite{LPV} as an alternative method for the (inviscid) periodic
problem with a convex Hamiltonian. Convexity played an important
role. First of all, it allowed one to use the control representation
of solutions. Furthermore, the convexity assumption implied
the subadditivity of certain solution-defining quantities, and the
homogenization result could be obtained from a subadditive ergodic
theorem.

The first two papers which addressed the stochastic homogenization of
viscous Hamilton-Jacobi equations, \cite{LS05} and \cite{KRV06}, used
variational representations of the solutions. In an attempt to steer away
from representation formulas and following some ideas in \cite{Sz94}
and \cite{LS10}, the paper \cite{AS12} introduced a method based on
the so-called metric problem. The convexity assumption gives a subadditivity
property to solutions of the metric problem and, thus, still
essentially restricts the approach to convex Hamiltonians (level-set
convex in the inviscid case, \cite{AS13}). This approach was further
developed in \cite{AT14} and \cite{AC15} where some assumptions are relaxed
and a rate of convergence is obtained.

For quite some time it was not clear whether the convexity assumption
can be disposed of in the stochastic setting. Several classes of
examples of nonconvex Hamiltonians for which homogenization holds
were recently constructed: \cite{ATY15}, \cite{ATY16}, \cite{G16},
\cite{FeS16+}, \cite{QTY17+} for inviscid equations and \cite{AC17},
\cite{DK17} for viscous equations. On the other hand, the work
\cite{Z17} has demonstrated that homogenization could fail for
nonconvex Hamiltonians in the general stationary and ergodic setting
for dimensions $d\ge2$.
%The counterexample in \cite{Z17} was built around a saddle point of a
%separated Hamiltonian $H(p)-V(\omega)$ in dimension 2 for the inviscid
%equation. Using a differential game interpretation, the author
%constructed a potential $V(\omega)$ with a slow decay of correlations
%which allowed each of the two players to use a saddle point and come
%up with a winning strategy with positive probability.
(See also \cite{FeS16+}.) These results indicate that the
homogenization or non-homogenization for nonconvex Hamiltonians
depends significantly on the interplay between the nonconvexity and
the random environment, and that one cannot expect a comprehensive
solution. The case of viscous equations is particularly difficult,
since the viscosity term adds yet another randomness, encoded in the
diffusion. The critical scaling at which the diffusion enters the
equation brings into play the large deviations for this diffusion and
makes the analysis even more challenging. %Our work provides a new class of
%examples of homogenizable viscous equations with nonconvex
%Hamiltonians.

In spite of the negative results of \cite{LS03} on the existence of
correctors for \eqref{geq}, the quest for them has never ended (see,
for example, \cite{DS09}, \cite{DS12}, \cite{AC17}). In particular,
the authors of \cite{CS17} have shown under quite general conditions
that if the equation \eqref{geq} homogenizes in probability to an
effective equation with some continuous and coercive Hamiltonian
$\overline{H}(p)$ then correctors exist for every $p$ that is an
extreme point of the convex hull of the sub-level set
$\{q\in\mathbb{R}^d:\,\overline{H}(q)\le \overline{H}(p)\}$. In
discrete multidimensional settings such as first passage percolation, random walks in
random environments and directed polymers, the existence of correctors
(and closely related Busemann functions) also received a lot of
attention (see, e.g., \cite{Y11}, \cite{DH14}, \cite{K16},
\cite{GRAS16}, \cite{BL16}).

\subsection{Motivation, method and outline}

Given the above developments and the complexity of the general
homogenization problem with a nonconvex
Hamiltonian, one can start with a more modest goal and look first at
some model examples of viscous Hamilton-Jacobi equations in dimension
1. For the inviscid case in dimension 1 there are already quite general homogenization
results, see \cite{ATY15} and \cite{G16}. It is natural to conjecture
that homogenization in dimension 1 holds under general assumptions in
the viscous case as well. Methods which were used in the inviscid case
are not applicable to the viscous case due to the presence of the
diffusion term.

In this paper we provide a new class of examples in the viscous
one-dimensional case for which homogenization holds. Our examples, in a way,
complement some of those considered earlier in \cite[Theorem
4.10]{DK17}. More precisely, the method of \cite{DK17} is applicable
to Hamiltonians which have one or more ``pinning points'' (i.e.\
values $p^*$ such that $H(p^*,\cdot)\equiv \text{const}$) and are
convex in $p$ in between the pinning points. For example,
$H(p,\omega)=\frac12\,p^2-b(\omega)|p|$, $b(\omega)>0$, is pinned at
$p^*=0$ and is convex in $p$ on $(-\infty,0)$ and $(0,\infty)$. Adding
a non-constant potential breaks the pinning property. Currently it is
not known if \eqref{geq} with
\begin{equation}
	\label{gqc}
	H(p,\omega)=\frac12\,|p|^2-b(\omega)|p|+\beta V(\omega)
\end{equation}
and $A\equiv \text{const}\not\equiv 0$ homogenizes even in dimension 1. Our
results give a positive answer in the case when $b(\omega)\equiv c>0$
and $V$ satisfies \eqref{ass:wlog}-\eqref{ass:vh}.

To prove Theorem~\ref{thm:main} we first consider the convex case
$c=0$ (no-control case for \eqref{cr}) and construct a
function $F_{\beta,\theta}$ for every $\theta$ outside of the
closed interval where the tilted free energy $\Lambda_\beta(\theta)$
(see \eqref{eq:tiltedfe}) attains its minimum value $\beta$, i.e.\
outside of the so-called ``flat piece'' of the effective Hamiltonian
$\overline{H}_{\beta,0}\equiv \Lambda_\beta$.
The function $F_{\beta,\theta}$ is used to introduce the exponential martingale $M_t(\omega)$, see \eqref{eq:martan}; for this reason we call $F_{\beta,\theta}$ a corrector.
The martingale $M_t(\omega)$ immediately yields the desired limit.
For each $c>0$ we build the effective Hamiltonian $\overline{H}_{\beta,c}$ by shifting
and bridging together pieces of $\Lambda_\beta(\cdot\pm c)$ (see
Section~\ref{subsec:EffH} and Figure~\ref{matrixfigure}).
We note that in our setting correctors exist for all $\theta$
outside of the ``flat pieces'' of $\overline{H}_{\beta,c}(\theta)$
and coincide with those constructed in the no-control case for
an appropriately shifted $\theta$.
We use each of these correctors to define an exponential expression (see \eqref{eq:submartan}) which turns out to be (i) a submartingale for arbitrary control policies and (ii) a martingale for specific control policies that we deduce to be asymptotically optimal.

The above approach was first proposed and implemented in \cite{YZ17} in the discrete setting
where the BM in the control problem \eqref{cr} is replaced by a random walk, the analog of Theorem \ref{thm:main} is proved for a viscous Hamilton-Jacobi partial difference equation and the effective Hamiltonian is shown to have the same structure as in this paper. However,
as it is often the case, the arguments in the continuous formulation differ noticeably.
We believe that some of the ideas in \cite{YZ17} and this paper can be extended to
more general settings, for example, to Hamiltonians of the form \eqref{gqc} in one or more dimensions.

\begin{comment}
\begin{example}\label{ex:poisson}
	Let $f:\mathbb{R}\to[0,\infty)$ be a $C^1$ function with compact support such that $\int_{-\infty}^{\infty}f(x)dx = 1$. Define \[\omega_x = \int_{-\infty}^\infty f(x-y)\one_{\{N([y-1,y+1])\ge 1\}}dy\] where $N$ is a homogeneous Poisson point process on $\mathbb{R}$. Set $\omega = (\omega_x)_{x\in\mathbb{R}}$ and denote the induced probability space by $(\Omega,\mathcal{F},\mathbb{P})$. Define $T_x$ naturally by $(T_x\omega)_y = \omega_{x+y}$ and let $V(\omega) = \omega_0$.
\end{example}

\begin{example}\label{ex:discrete}
	Let $\Omega = [0,1]^\mathbb{Z}\times [0,1)$, $\mathcal{F}$ the Borel $\sigma$-algebra on $\Omega$, and $\mathbb{P} = \mu^{\mathbb{Z}}\times m$ a product measure on $(\Omega,\mathcal{F})$ such that $\mu$ has full support on $[0,1]$ and $m$ is uniform on $[0,1)$. Use the notation $\omega = \left((\eta_k)_{k\in\mathbb{Z}},u\right)\in\Omega$ and define $T_x:\Omega\to\Omega$ by $T_x\omega = \left((\eta_{k + \lfloor x + u\rfloor})_{k\in\mathbb{Z}},x + u - \lfloor x+u\rfloor\right)$.
    Let $f$ be as in Example \ref{ex:poisson} and define
	\[V(\omega) = \int_{-\infty}^{\infty}f(u-y)\eta_{\lfloor y\rfloor}dy.\]
\end{example}
\end{comment}

We end this introduction with a brief outline of the rest of the paper. Section \ref{sec:nocon} focuses on the no-control case. We obtain a uniform lower bound using the existence of arbitrarily long high hills, construct the aforementioned correctors, use them to give a self-contained proof of the existence of the tilted free energy (Theorem \ref{thm:tiltedfe}) and list some of the properties of the latter (Proposition \ref{prop:list}).
	Section \ref{sec:UB} contains upper bounds for the control problem. We restrict the infimum in \eqref{cr} to bang-bang policies, consider the constant policies $\aleft\equiv -c$ and $\aright\equiv c$ as well as a family of policies $\alpha^{(x_*,h,y)}$ which tries to trap the particle to low valleys. These upper bounds produce the graphs in Figure \ref{matrixfigure}.
Section \ref{sec:LB} provides matching lower bounds. We obtain a uniform lower bound similar to that in the no-control case, use the correctors outside of the flat pieces as mentioned above, give a scaling argument at the elevated flat piece centered at the origin when $\beta<c^2/2$, and thereby prove the existence of the effective Hamiltonian (Theorem \ref{thm:control}). 
	Finally, Section \ref{sec:homogen} wraps up the solution of the homogenization problem. We derive the control representation and put our limit results (Theorems \ref{thm:tiltedfe} and \ref{thm:control}) together with relevant results from \cite{DK17} to prove Theorem \ref{thm:main} and Corollary \ref{cor:main}. Note that the regularity assumption \eqref{ass:vreg} on $V$ is used only in Section \ref{sec:homogen} and it is replaced by the weaker continuity assumption \eqref{ass:vhol} in Sections \ref{sec:nocon}, \ref{sec:UB} and \ref{sec:LB}.

\section{No control}\label{sec:nocon}

We start our analysis with the special case of no control, $c=0$, where the limit on the RHS of \eqref{cr} simplifies to %(after substituting $t$ for $S$ for convenience)
\begin{equation}\label{eq:tiltedfe}
\overline H_{\beta,0}(\theta) = \Lambda_\beta(\theta) = \lim_{t\to\infty}\frac1{t}\log E_0\left[e^{\beta\int_{0}^tV(T_{X_s}\omega)ds + \theta X_t}\right].
\end{equation}
In this section, we assume that $V$ satisfies \eqref{ass:wlog}, \eqref{ass:vh} and $\forall\omega\in\Omega$
\begin{equation}\label{ass:vhol}
x\mapsto V(T_x\omega)\ \text{is H\"older continuous with some positive exponent.}
\end{equation}
Under these assumptions, we prove that the limit in \eqref{eq:tiltedfe} exists for all $\beta>0$, $\theta\in\mathbb{R}$ and $\mathbb{P}$-a.e.\ $\omega$.
To this end, we define
\begin{equation}
\begin{aligned}\label{eq:sondis}
\Lambda_\beta^L(\theta) &= \liminf_{t\to\infty}\frac1{t}\log E_0\left[e^{\beta\int_{0}^tV(T_{X_s}\omega)ds + \theta X_t}\right]\\
\text{and}\hspace{15mm}  &\hspace{70mm} \\
\Lambda_\beta^U(\theta) &= \limsup_{t\to\infty}\frac1{t}\log E_0\left[e^{\beta\int_{0}^tV(T_{X_s}\omega)ds + \theta X_t}\right].
\end{aligned}
\end{equation}

The quantity $\Lambda_\beta(\theta)$ is referred to as the tilted free energy of BM. Its existence is covered by the works \cite{LS05, KRV06} on the homogenization of viscous Hamilton-Jacobi equations with convex Hamiltonians. We give a short and self-contained proof which relies on the construction of correctors and provides an implicit formula for $\Lambda_\beta(\theta)$. Introducing these correctors is in fact our main motivation here since they will play a key role in our solution of the control problem (i.e.\ showing the existence of the limit on the RHS of \eqref{cr} for $c>0$) in Section \ref{sec:LB}.

\subsection{Uniform lower bound (no control)}

Throughout the paper, we make use of the hitting times
\[\tau_y = \inf\{t\ge0:\,X_t = y\}\quad\text{and}\quad \tau_{\pm y} = \tau_{-y}\wedge\tau_y,\quad y\in\mathbb{R}.\]

\begin{lemma}\label{lem:eig}
	For every $y>0$,
	\[\lim_{t\to\infty}\frac1{t}\log P_0(\tau_{\pm y} > t) = -\frac{\pi^2}{8y^2}.\]
\end{lemma}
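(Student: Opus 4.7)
The plan is to identify $P_0(\tau_{\pm y}>t)$ with the value at $x=0$ of the Dirichlet heat semigroup on the interval $(-y,y)$ acting on the constant function $1$, and then to read off the rate from the principal eigenvalue of that semigroup's generator. Concretely, set $q(t,x):=P_x(\tau_{\pm y}>t)$; by Feynman--Kac together with standard PDE theory, $q$ is the unique bounded classical solution of
\[\partial_t q=\tfrac12\partial_{xx}q\ \text{on}\ (-y,y),\qquad q(t,\pm y)=0,\qquad q(0,\cdot)\equiv 1.\]

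Next I would diagonalize $-\tfrac12\partial_{xx}$ with Dirichlet boundary conditions on $L^2(-y,y)$. Its spectrum is purely discrete, consisting of eigenvalues $\lambda_n=n^2\pi^2/(8y^2)$ with $L^2$-orthonormal eigenfunctions $\phi_n(x)=y^{-1/2}\sin(n\pi(x+y)/(2y))$, $n\ge 1$. In particular $\lambda_1=\pi^2/(8y^2)$ is the principal eigenvalue and $\phi_1$ is strictly positive on $(-y,y)$ with $\phi_1(0)=y^{-1/2}>0$. Expanding the initial datum $1$ in this orthonormal basis yields the representation
\[q(t,x)=\sum_{n\ge 1}a_n\phi_n(x)e^{-\lambda_n t},\qquad a_n=\int_{-y}^{y}\phi_n(u)\,du,\]
where $a_n=0$ for even $n$ by symmetry and $a_1>0$ because $\phi_1$ is positive. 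Evaluating at $x=0$ and isolating the dominant term gives $q(t,0)=a_1\phi_1(0)e^{-\lambda_1 t}(1+O(e^{-(\lambda_3-\lambda_1)t}))$ as $t\to\infty$, after which taking $t^{-1}\log$ produces the stated limit.

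The argument is essentially textbook; the only real care needed is justifying the termwise convergence of the series and the extraction of the leading term, both standard facts for the one-dimensional Dirichlet heat equation on a bounded interval. If one prefers to bypass the full spectral expansion, the two matching bounds can be established separately: the upper bound $\limsup t^{-1}\log q(t,0)\le -\pi^2/(8y^2)$ follows from the $L^2\to L^\infty$ smoothing of the Dirichlet heat semigroup combined with the Rayleigh quotient bound $\|q(t,\cdot)\|_2\le e^{-\lambda_1 t}\|q(0,\cdot)\|_2$, while the matching lower bound follows from semigroup monotonicity applied to a positive multiple of $\phi_1$ bounded above by the initial datum $1$, which yields $q(t,x)\ge \sqrt{y}\,e^{-\lambda_1 t}\phi_1(x)$ and hence $q(t,0)\ge e^{-\lambda_1 t}$. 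There is no significant obstacle here — the statement is a classical spectral fact about Brownian exit times dressed up in the notation needed for subsequent arguments.
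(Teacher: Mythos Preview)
Your proposal is correct and follows essentially the same approach as the paper: the paper's proof consists of the single sentence ``This follows immediately from the spectral analysis of the Laplace operator on $[-y,y]$ with Dirichlet boundary conditions'' together with a textbook citation, and you have simply spelled out that spectral argument in detail. There is no substantive difference in method.
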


\begin{proof}
	This follows immediately from the spectral analysis of the Laplace operator on $[-y,y]$ with Dirichlet boundary conditions (see \cite[Section 5.8]{V07}).
\end{proof}

\begin{lemma}\label{lem:unifLB}
	For every $\beta>0$, $\theta\in\mathbb{R}$ and $\mathbb{P}$-a.e.\ $\omega$, we have $\Lambda_\beta^L(\theta) \ge \beta$ with the notation in \eqref{eq:sondis}.
\end{lemma}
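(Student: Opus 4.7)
The strategy is to bound the Brownian expectation from below by restricting to paths that reach a long $h$-hill in bounded time and then remain inside it, and to exploit Lemma~\ref{lem:eig} to control the survival cost in the hill. Letting $h\uparrow 1$ and the hill length $\to\infty$ will then produce the bound $\beta$.

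First, fix $h\in(0,1)$ and $y>0$. By assumption \eqref{ass:vh} the event $\{[0,2y]\text{ is an }h\text{-hill}\}$ has positive probability, so by the ergodicity of $\{T_x\}_{x\in\mathbb{R}}$ there is, for $\mathbb{P}$-a.e.\ $\omega$, some (random) $x_*=x_*(\omega,h,y)\in\mathbb{R}$ such that $[x_*-y,x_*+y]$ is an $h$-hill for $\omega$. Fix such an $x_*$.

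Next, for $t\ge 1$ consider the event
\[
A_t=\{\tau_{x_*}\le 1\}\cap\bigl\{X_s\in[x_*-y,x_*+y]\ \text{for all}\ s\in[\tau_{x_*},t]\bigr\}.
\]
On $A_t$ we have $V(T_{X_s}\omega)\ge h$ for $s\in[\tau_{x_*},t]$, hence
\[
\int_0^t V(T_{X_s}\omega)\,ds\ \ge\ h(t-\tau_{x_*})\ \ge\ h(t-1),
\]
and $\theta X_t\ge\theta x_*-|\theta|y$. Applying the strong Markov property at $\tau_{x_*}$ and the translation invariance of BM to peel off the survival probability gives
\[
E_0\!\left[e^{\beta\int_0^t V(T_{X_s}\omega)\,ds+\theta X_t}\right]\ \ge\ e^{\beta h(t-1)+\theta x_*-|\theta|y}\,P_0(\tau_{x_*}\le 1)\,P_0(\tau_{\pm y}>t).
\]

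Taking $\tfrac1t\log$ of both sides, noting that $x_*$, $y$, $\theta$ and $P_0(\tau_{x_*}\le 1)>0$ are $t$-independent, and passing to $\liminf_{t\to\infty}$ using Lemma~\ref{lem:eig}, we obtain
\[
\Lambda_\beta^L(\theta)\ \ge\ \beta h-\frac{\pi^2}{8y^2}.
\]
Letting first $y\to\infty$ and then $h\uparrow 1$ yields $\Lambda_\beta^L(\theta)\ge\beta$, as required.

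The only nontrivial input beyond direct computation is producing the point $x_*$; this is handled by ergodicity together with \eqref{ass:vh}. Everything else (the strong Markov decomposition and the small-eigenvalue bound) is routine.
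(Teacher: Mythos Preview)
Your proof is correct and follows essentially the same approach as the paper: find an $h$-hill via ergodicity and \eqref{ass:vh}, restrict to paths that reach it within time $1$ and stay inside, apply the strong Markov property together with Lemma~\ref{lem:eig}, and then send $h\to1$, $y\to\infty$. You merely spell out in more detail the intermediate bounds (on $\int_0^t V$, on $\theta X_t$, and the factorization $P_0(\tau_{x_*}\le 1)\,P_0(\tau_{\pm y}>t)$) that the paper compresses into a single displayed inequality.
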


\begin{proof}
	By \eqref{ass:vh} and ergodicity, for every $h\in(0,1)$, $y>0$ and $\mathbb{P}$-a.e.\ $\omega$, there is an $h$-hill of the form $[x^*-y,x^*+y]$ for some $x^*\in\mathbb{R}$ (which depends on $\omega$). Using the strong Markov property and Lemma \ref{lem:eig}, we get
	\[\Lambda_\beta^L(\theta) \ge \liminf_{t\to\infty}\frac1{t}\log E_0\left[e^{\beta\int_{0}^tV(T_{X_s}\omega)ds + \theta X_t}\one_{\{\tau_{x^*} \le 1\}\cap\{|X_s- x^*|<y\ \text{for every}\ s\in[\tau_{x^*},t]\}}\right] \ge \beta h - \frac{\pi^2}{8y^2}.\]
	Finally, we send $h\to1$ and $y\to\infty$.
\end{proof}

\subsection{Correctors}\label{subsec:cor}

For every $\beta>0$, $\lambda\ge\beta$, $\omega\in\Omega$ and $x,y\in\mathbb{R}$, let
\[v_\beta^\lambda(\omega,x;y) = E_x\left[e^{\beta\int_{0}^{\tau_y}V(T_{X_s}\omega)ds - \lambda\tau_y}\right].\]
We make two elementary observations. First,
\begin{equation}\label{eq:mell}
v_\beta^\lambda(\omega,x;z) = v_\beta^\lambda(\omega,x;y)\,v_\beta^\lambda(\omega,y;z)
\end{equation}
by the strong Markov property of BM whenever $x\le y\le z$ or $x\ge y\ge z$. Second, since $V(\cdot)\in[0,1]$, it follows from Lemma \ref{lem:hittime} (below) that
\begin{equation}\label{eq:esti}
\sqrt{2(\lambda - \beta)}|x-y| \le - \log v_\beta^\lambda(\omega,x;y) \le \sqrt{2\lambda}|x-y|.
\end{equation}

\begin{lemma}\label{lem:hittime}
	For every $a \ge 0$ and $x,y\in\mathbb{R}$,
	\[E_x\left[e^{-a\tau_y}\right] = e^{-\sqrt{2a}|x-y|}.\]
\end{lemma}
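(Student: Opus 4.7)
My plan is to establish the identity by the standard exponential martingale argument for Brownian motion. Spatial reflection (replacing $X$ by $2x-X$ under $P_x$) and the obvious equality $\tau_{2x-y}(2x-X) = \tau_y(X)$ reduce everything to the case $x \le y$; the case $a=0$ is trivial because $\tau_y < \infty$ $P_x$-almost surely.

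Fix $a>0$ and $x \le y$, set $\mu = \sqrt{2a}$, and consider the positive process
\[
M_t = \exp\!\bigl(\mu(X_t - x) - a t\bigr),\qquad t \ge 0.
\]
By Itô's formula (or a direct Gaussian moment computation), $(M_t)_{t\ge 0}$ is a $P_x$-martingale with $M_0=1$. The crucial observation is that for every $t \ge 0$ one has $X_{t\wedge\tau_y} \le y$, so
\[
M_{t\wedge\tau_y} \le e^{\mu(y-x)},
\]
making the stopped process uniformly bounded.

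Since $\tau_y < \infty$ $P_x$-a.s.\ (Brownian motion is recurrent) and $M_{t\wedge\tau_y} \to M_{\tau_y} = e^{\mu(y-x) - a\tau_y}$ as $t\to\infty$, bounded convergence combined with the optional sampling theorem for bounded stopped martingales gives $E_x[M_{\tau_y}] = M_0 = 1$. Rearranging produces
\[
E_x\!\bigl[e^{-a\tau_y}\bigr] = e^{-\mu(y-x)} = e^{-\sqrt{2a}\,|x-y|},
\]
as desired. The only point requiring any attention is justifying the application of optional sampling, and that is handled immediately by the uniform bound above. I do not anticipate any obstacle beyond this standard bookkeeping.
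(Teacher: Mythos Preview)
Your argument is correct: the exponential martingale $M_t=\exp(\mu(X_t-x)-at)$ with $\mu=\sqrt{2a}$ together with the uniform bound on the stopped process and bounded convergence is exactly the standard proof. The paper itself merely cites \cite[Chapter~II, Proposition~3.7]{RY99}, whose proof is precisely this exponential-martingale/optional-stopping computation, so your approach coincides with the one behind the reference.
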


\begin{proof}
	See \cite[Chapter 2, Proposition 3.7]{RY99}.
\end{proof}

For every $\beta>0$, $\theta\ne 0$, $\lambda\ge\beta$, $\omega\in\Omega$ and $x\in\mathbb{R}$, let
\begin{align}
F_{\beta,\theta}^\lambda(\omega,x) &= \begin{cases}
-\log v_\beta^\lambda(\omega,0;x) - \theta x & \text{if $\theta x \ge 0$,}\\
\quad \log v_\beta^\lambda(\omega,x;0) - \theta x & \text{if $\theta x < 0$.}\end{cases}\label{eq:defF}
\end{align}
It follows from \eqref{eq:mell} that
\begin{equation}\label{eq:datsi}
F_{\beta,\theta}^\lambda(\omega,x) = \log v_\beta^\lambda(\omega,x;z) -\log v_\beta^\lambda(\omega,0;z) - \theta x
\end{equation}
for every $z\in\mathbb{R}$ such that $(\theta x)^+ \le \theta z$. Using this representation, it is easy to check that
\begin{equation}\label{eq:cocycle}
F_{\beta,\theta}^\lambda(\omega,x) + F_{\beta,\theta}^\lambda(T_x\omega,y) = F_{\beta,\theta}^\lambda(\omega,x+y)
\end{equation}
for every $\omega\in\Omega$ and $x,y\in\mathbb{R}$. We refer to this identity as the cocycle property.

In the following lemma and the rest of the paper, we use the notation $(\cdot)' = \frac{\partial}{\partial x}(\cdot)$ and $(\cdot)'' = \frac{\partial^2}{\partial x^2}(\cdot)$. %whenever no confusion occurs.

\begin{lemma}\label{lem:cell}
	For every $\beta>0$, $\theta\ne 0$, $\lambda\ge\beta$ and $\omega\in\Omega$, the function $x\mapsto F_{\beta,\theta}^\lambda(\omega,x)$ is in $C^2(\mathbb{R})$ and
	\[\frac1{2}\left(F_{\beta,\theta}^\lambda\right)'' + \frac1{2}\left(\theta + \left(F_{\beta,\theta}^\lambda\right)'\right)^2 + \beta V(T_x\omega) = \lambda,\quad x\in\mathbb{R}.\]
\end{lemma}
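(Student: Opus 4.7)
The strategy is to use the cocycle-type representation \eqref{eq:datsi} to rewrite $F_{\beta,\theta}^\lambda(\omega,\cdot)$ locally as the logarithm of a Feynman-Kac expectation minus a linear function, and then invoke the classical Schr\"odinger-type ODE satisfied by such expectations. Fix $\beta>0$, $\theta \ne 0$, $\lambda \ge \beta$, $\omega \in \Omega$, and an arbitrary $x_0 \in \mathbb{R}$. Choose $z$ with $\theta z > (\theta x_0)^+$; then the inequality $(\theta x)^+ \le \theta z$ persists on an open neighborhood $U$ of $x_0$, and by \eqref{eq:datsi},
\[F_{\beta,\theta}^\lambda(\omega, x) = \log v_\beta^\lambda(\omega, x; z) - \log v_\beta^\lambda(\omega, 0; z) - \theta x, \qquad x \in U.\]
It therefore suffices to analyze $x \mapsto v(x) := v_\beta^\lambda(\omega, x; z)$ on $U$.

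Next I would show that $v$ is positive, of class $C^2$ on $\mathbb{R} \setminus \{z\}$, and satisfies the linear second-order ODE
\[\tfrac{1}{2} v''(x) + \bigl(\beta V(T_x \omega) - \lambda\bigr) v(x) = 0.\]
Positivity and boundedness follow at once from \eqref{eq:esti}. For the regularity and the ODE, I would run the standard Feynman-Kac/Dynkin identification: since $q(x) := \beta V(T_x\omega) - \lambda$ is bounded by \eqref{ass:wlog} and H\"older continuous by \eqref{ass:vhol}, classical Schauder theory produces a $C^2$ solution $\phi$ of $\tfrac{1}{2}\phi'' + q\phi = 0$ on any bounded subinterval with prescribed boundary values; applying It\^o's formula to $e^{\int_0^{t \wedge \tau} q(X_s)\,ds}\,\phi(X_{t \wedge \tau})$, with $\tau$ the exit time from the subinterval, and then sending $t \to \infty$ and growing the subinterval to the relevant half-line bounded by $z$, identifies $v$ with $\phi$ on each of $(-\infty, z)$ and $(z, \infty)$. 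Hence $v$ inherits $C^2$ regularity and solves the ODE on these half-lines.

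The remaining step is a direct logarithmic derivative computation. Setting $u = \log v$, we have $u' = v'/v$ and $u'' = v''/v - (u')^2$, so the linear ODE for $v$ becomes
\[u'' + (u')^2 = 2\bigl(\lambda - \beta V(T_x\omega)\bigr),\]
equivalently $\tfrac{1}{2} u'' + \tfrac{1}{2}(u')^2 + \beta V(T_x\omega) = \lambda$. From the representation of $F$ displayed above, on $U$ we have $F' = u' - \theta$ and $F'' = u''$, so substituting $u' = F' + \theta$ and $u'' = F''$ gives
\[\tfrac{1}{2} (F_{\beta,\theta}^\lambda)'' + \tfrac{1}{2}\bigl(\theta + (F_{\beta,\theta}^\lambda)'\bigr)^2 + \beta V(T_x\omega) = \lambda,\]
which is the stated identity. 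Since $x_0$ was arbitrary and the choice of $z$ can always be made with $x_0 \ne z$, both the $C^2$ regularity and the ODE extend to all of $\mathbb{R}$.

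I expect the main obstacle to be the Feynman-Kac regularity step, where one must carefully identify the stochastic expression $v_\beta^\lambda(\omega,\cdot;z)$ with the classical $C^2$ solution of the linear ODE rather than only a viscosity/weak solution; the H\"older continuity assumption \eqref{ass:vhol} is essential here to upgrade from the $C^1$ bound available for free (from It\^o and the Markov property) to the $C^2$ regularity needed to apply the ODE pointwise. Once that identification is in hand, the passage from the linear ODE for $v$ to the nonlinear ODE for $F$ is routine.
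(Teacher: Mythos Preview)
Your proposal is correct and follows essentially the same route as the paper: invoke the representation \eqref{eq:datsi} to reduce to the Feynman--Kac function $v_\beta^\lambda(\omega,\cdot;z)$, use the H\"older assumption \eqref{ass:vhol} to get $C^2$ regularity and the linear Schr\"odinger ODE for $v$ on $\mathbb{R}\setminus\{z\}$, and then take logarithms. The paper simply cites Dynkin for the regularity/ODE step where you sketch the Schauder/It\^o identification, but the argument is otherwise identical.
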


\begin{proof}
	For every $\beta>0$, $\lambda\ge\beta$, $\omega\in\Omega$ and $y\in\mathbb{R}$, the function $x\mapsto v_\beta^\lambda(\omega,x;y)$ is in $C^2(\mathbb{R}\setminus\{y\})$ and
	\[\frac1{2}\left(v_\beta^\lambda\right)''(\omega,x;y) + \left(\beta V(T_x\omega) - \lambda\right)v_\beta^\lambda(\omega,x;y) = 0,\quad x\in\mathbb{R}\setminus\{y\},\]
	by \eqref{ass:wlog} and \eqref{ass:vhol}. See \cite[Chapter 6, Section 3]{Dyn02}. The desired result follows from taking the logarithm of $v_\beta^\lambda$ and using the representation in \eqref{eq:datsi}.
\end{proof}

So far we have been working with an arbitrary $\lambda\ge\beta$. In Lemma \ref{lem:minzer} below, we identify a particular choice of $\lambda$.

\begin{lemma}\label{lem:dombr}
	If $\theta > 0$ and $\beta\le\lambda<\Lambda_\beta^U(\theta)$, then $\mathbb{E}[F_{\beta,\theta}^\lambda(\cdot,1)] \le 0$.
\end{lemma}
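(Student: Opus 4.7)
The plan is to prove the contrapositive: assume $\bar F := \mathbb{E}[F_{\beta,\theta}^\lambda(\cdot, 1)] > 0$ and derive an upper bound on $u_\omega(t) := E_0[e^{\beta\int_0^t V(T_{X_s}\omega)ds + \theta X_t}]$ that contradicts $\lambda < \Lambda_\beta^U(\theta)$. Before doing this, I reduce to $\lambda > \beta$: since $\lambda \mapsto v_\beta^\lambda(\omega, 0; 1)$ is decreasing in $\lambda$, the map $\lambda \mapsto F_{\beta,\theta}^\lambda(\omega, 1)$ is increasing, so once the lemma is established for every $\lambda' \in (\beta, \Lambda_\beta^U(\theta))$, the boundary case $\lambda = \beta$ follows from $\mathbb{E}[F_{\beta,\theta}^\beta(\cdot, 1)] \le \mathbb{E}[F_{\beta,\theta}^{\lambda'}(\cdot, 1)] \le 0$.

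The key tool for $\lambda > \beta$ is the exponential supermartingale
\[M_t := \exp\bigl(F_{\beta,\theta}^\lambda(\omega, X_t) + \theta X_t + \beta\int_0^t V(T_{X_s}\omega)\,ds - \lambda t\bigr).\]
By Lemma \ref{lem:cell} and It\^{o}'s formula, $dM_t = M_t\bigl((F_{\beta,\theta}^\lambda)'(\omega, X_t) + \theta\bigr)dX_t$ --- the PDE in Lemma \ref{lem:cell} is calibrated exactly to cancel the It\^{o} correction --- so $M_t$ is a positive local martingale with $M_0 = 1$, hence a supermartingale with $E_0[M_t] \le 1$. Separately, the cocycle identity \eqref{eq:cocycle} combined with the bound \eqref{eq:esti} (which supplies integrability) lets Birkhoff's ergodic theorem yield $F_{\beta,\theta}^\lambda(\omega, x)/x \to \bar F$ as $x \to \infty$ $\mathbb{P}$-a.s.; for any $\epsilon \in (0, \bar F)$ there is then a random threshold $N(\omega)$ above which $F_{\beta,\theta}^\lambda(\omega, x) \ge (\bar F - \epsilon)x$.

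I then bound $u_\omega(t)$ by splitting it on $\{X_t < \delta t\}$ and $\{X_t \ge \delta t\}$ for small $\delta \in (0, \theta)$. On the first event, using $V \le 1$ and a Gaussian tail bound on $X_t$ gives a contribution of order $e^{(\beta + \theta\delta - \delta^2/2)t}$. On the second event, rewriting $e^{\beta\int V + \theta X_t} = e^{-F_{\beta,\theta}^\lambda(\omega, X_t)} M_t e^{\lambda t}$ and applying $F_{\beta,\theta}^\lambda(\omega, X_t) \ge (\bar F - \epsilon)\delta t$ (valid once $\delta t \ge N(\omega)$) together with $E_0[M_t] \le 1$ yields a contribution of order $e^{(\lambda - (\bar F - \epsilon)\delta)t}$. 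Choosing $\delta = (\lambda - \beta)/(2\theta)$ makes both exponents strictly less than $\lambda$, so $\limsup_{t\to\infty} t^{-1}\log u_\omega(t) < \lambda$ $\mathbb{P}$-a.s., contradicting $\lambda < \Lambda_\beta^U(\theta)$.

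The principal obstacle --- and the reason the case $\lambda = \beta$ must be handled through the monotonicity reduction above --- is that the Gaussian term $e^{(\beta + \theta\delta - \delta^2/2)t}$ on $\{X_t < \delta t\}$ can be driven strictly below $e^{\lambda t}$ only when $\lambda > \beta$; at $\lambda = \beta$ no positive $\delta$ suffices. A secondary subtlety is that $N(\omega)$ from Birkhoff is random, but this is harmless since only the large-$t$ behavior controls the $\limsup$.
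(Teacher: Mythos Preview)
Your argument is correct, but it takes a genuinely different route from the paper's. The paper decomposes $E_0[e^{\beta\int_0^t V + \theta X_t}]e^{-\lambda t}$ according to the hitting times $\tau_k$ of the integers: on $\{\tau_k < t \le \tau_{k+1}\}$ it bounds $X_t \le k+1$, drops the (nonpositive) contribution $\int_{\tau_k}^t(\beta V - \lambda)\,ds$, and uses the strong Markov property to factor, arriving at the $t$-independent series $\sum_{k\ge0} e^{\theta - \sum_{j=0}^{k-1} F_{\beta,\theta}^\lambda(T_j\omega,1)}$. Since $\lambda < \Lambda_\beta^U(\theta)$ forces this series to diverge, Birkhoff's theorem gives $\mathbb{E}[F_{\beta,\theta}^\lambda(\cdot,1)] \le 0$.

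By contrast, you split on the terminal position $X_t$ and use the exponential (local) martingale $M_t$ built from the corrector PDE of Lemma~\ref{lem:cell}, together with Birkhoff applied to $x\mapsto F_{\beta,\theta}^\lambda(\omega,x)$ via the cocycle. The paper's proof is more elementary in that it never invokes Lemma~\ref{lem:cell} or It\^{o}'s formula---only the definition of $v_\beta^\lambda$ and the strong Markov property---and it handles the boundary case $\lambda=\beta$ uniformly, whereas you need the separate monotonicity reduction. Your approach, on the other hand, anticipates (and is consistent with) the martingale machinery of Section~\ref{subsec:tfe}; note that the derivative bound of Lemma~\ref{lem:simlaz} actually holds for every $\lambda\ge\beta$, so your $M_t$ is in fact a true martingale, not merely a supermartingale. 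One small point you leave implicit: your choice $\delta=(\lambda-\beta)/(2\theta)$ does satisfy $\delta<\theta$ because $\lambda<\Lambda_\beta^U(\theta)\le\beta+\theta^2/2$ (from $V\le1$), so the Gaussian tail bound on $\{X_t<\delta t\}$ is indeed applicable.
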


\begin{proof}
	For every $t>0$,
	\begin{align}
		E_0\left[e^{\beta\int_0^tV(T_{X_s}\omega)ds + \theta X_t}\right]e^{ - t\lambda} &= \sum_{k=0}^\infty E_0\left[e^{\int_0^t[\beta V(T_{X_s}\omega) - \lambda]ds  + \theta X_t}\one_{\{\tau_{k} < t \le \tau_{k + 1}\}}\right]\nonumber\\
		&\le \sum_{k=0}^\infty E_0\left[e^{\int_0^t[\beta V(T_{X_s}\omega) - \lambda]ds + \theta(k+1)}\one_{\{\tau_{k} < t \le \tau_{k + 1}\}}\right]\nonumber\\
		&\le \sum_{k=0}^\infty e^{\theta}E_0\left[e^{\int_0^{\tau_k}[\beta V(T_{X_s}\omega) - \lambda]ds + \theta k}\right]\nonumber\\
		&= \sum_{k=0}^\infty e^{\theta}\prod_{j=0}^{k-1}E_j\left[e^{\int_0^{\tau_{j+1}}[\beta V(T_{X_s}\omega) - \lambda]ds}\right]e^\theta\label{eq:tirrik}\\
		&= \sum_{k=0}^\infty e^{\theta + \sum_{j=0}^{k-1}[\log v_\beta^\lambda(T_j\omega,0;1) + \theta]} = \sum_{k=0}^\infty e^{\theta - \sum_{j=0}^{k-1}F_{\beta,\theta}^\lambda(T_j\omega,1)}.\label{eq-cancun2}
	\end{align}
	The equality in \eqref{eq:tirrik} follows from the strong Markov property of BM. We take $\limsup$ as $t\to\infty$, use $\Lambda_\beta^U(\theta) - \lambda > 0$ and deduce that the RHS of \eqref{eq-cancun2} is infinite. This implies that $\mathbb{E}[F_{\beta,\theta}^\lambda(\cdot,1)] \le 0$, since otherwise the RHS of \eqref{eq-cancun2} would be finite by the Birkhoff ergodic theorem.
\end{proof}

\begin{lemma}\label{lem:minzer}
	If $\beta<\Lambda_\beta^U(\theta)$, then there exists a $\lambda_o = \lambda_o(\beta,\theta) \ge \Lambda_\beta^U(\theta)$ such that $\mathbb{E}[F_{\beta,\theta}^{\lambda_o}(\cdot,1)] = 0$. 
\end{lemma}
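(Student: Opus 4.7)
I would assume first that $\theta > 0$; the case $\theta < 0$ follows from the symmetric statement applied to $F^\lambda_{\beta,\theta}(\cdot, -1)$ together with the cocycle identity \eqref{eq:cocycle} (which, combined with Birkhoff, yields $\mathbb{E}[F^\lambda_{\beta,\theta}(\cdot, 1)] = -\mathbb{E}[F^\lambda_{\beta,\theta}(\cdot, -1)]$, and the symmetric form of Lemma \ref{lem:dombr} is obtained by reflecting the Brownian motion). Set
\[g(\lambda) := \mathbb{E}\bigl[F^\lambda_{\beta,\theta}(\cdot, 1)\bigr],\quad \lambda \ge \beta.\]
The plan is to show that $g$ is continuous and nondecreasing on $[\beta,\infty)$ with $g(\lambda)\to +\infty$ as $\lambda\to\infty$, to combine this with Lemma \ref{lem:dombr} (which gives $g(\lambda) \le 0$ for every $\lambda \in [\beta, \Lambda_\beta^U(\theta))$), and to conclude by the intermediate value theorem applied at the point
\[\lambda_o := \sup\{\lambda \ge \beta : g(\lambda) \le 0\},\]
which will automatically satisfy $\Lambda_\beta^U(\theta) \le \lambda_o < \infty$.

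\textbf{Key steps.} Since $\theta>0$ and $x=1$, definition \eqref{eq:defF} reduces to $F^\lambda_{\beta,\theta}(\omega, 1) = -\log v_\beta^\lambda(\omega, 0; 1) - \theta$, so \eqref{eq:esti} provides the $\omega$-uniform bound
\[\sqrt{2(\lambda-\beta)} - \theta \,\le\, F^\lambda_{\beta,\theta}(\omega, 1) \,\le\, \sqrt{2\lambda} - \theta.\]
This gives both the finiteness of $g$ and the divergence $g(\lambda)\to +\infty$. Monotonicity is immediate from the representation $v_\beta^\lambda(\omega, 0; 1) = E_0\bigl[e^{\beta\int_0^{\tau_1} V(T_{X_s}\omega)ds - \lambda\tau_1}\bigr]$: the integrand is pointwise nonincreasing in $\lambda$, hence $F^\lambda_{\beta,\theta}(\omega,1)$ is pointwise nondecreasing in $\lambda$, and so is $g$. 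Continuity of $v_\beta^\lambda(\omega, 0; 1)$ in $\lambda$ for each $\omega$ follows by dominated convergence with majorant $e^{-\beta\tau_1}$; combining with the uniform bound above and a second application of dominated convergence lifts this to continuity of $g$ on $[\beta,\infty)$.

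\textbf{Where I expect difficulty.} The argument is essentially just the intermediate value theorem, so no serious obstacle is anticipated; the only point that requires a little care is the continuity of $g$, which rests on having the $\omega$-uniform estimate \eqref{eq:esti} serve as a dominating majorant. Given that, the sign change of $g$ between the interval $[\beta,\Lambda_\beta^U(\theta))$ (controlled by Lemma \ref{lem:dombr}) and $\lambda\to\infty$ (controlled by \eqref{eq:esti}) produces the desired $\lambda_o$, and the inequality $\lambda_o\ge \Lambda_\beta^U(\theta)$ is built into the definition of $\lambda_o$.
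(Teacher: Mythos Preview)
Your proof is correct and follows essentially the same approach as the paper's: establish continuity and monotonicity of $\lambda\mapsto\mathbb{E}[F_{\beta,\theta}^\lambda(\cdot,1)]$ via the bounds \eqref{eq:esti} and dominated convergence, then invoke Lemma~\ref{lem:dombr} and the intermediate value theorem. One minor slip: your claimed majorant $e^{-\beta\tau_1}$ is not an upper bound for the integrand (take $V\equiv1$, $\lambda=\beta$); the correct majorant on $[\beta,\infty)$ is simply $1$, since $e^{\beta\int_0^{\tau_1}V\,ds-\lambda\tau_1}\le e^{(\beta-\lambda)\tau_1}\le 1$.
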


\begin{proof}
	Assume that $\theta>0$. The map $\lambda\mapsto\mathbb{E}[F_{\beta,\theta}^\lambda(\cdot,1)]$ % = y\mathbb{E}\left[F_{\beta,\theta}^\lambda(\cdot,1)\right]$
	is continuous and strictly increasing for $\lambda\ge\beta$. Moreover,
	\[\lim_{\lambda\to\infty}\mathbb{E}\left[F_{\beta,\theta}^\lambda(\cdot,1)\right] = \infty.\]
	These assertions follow from the $P_0$-a.s.\ positivity of $\tau_1$, the uniform (in $\omega$) bounds in \eqref{eq:esti} and the dominated convergence theorem. Recalling Lemma \ref{lem:dombr}, we deduce the desired result. The $\theta<0$ case is similar. %first for $x=1$ and then for general $x\in\mathbb{R}$ by the cocycle property. 
\end{proof}

In the rest of this paper, we write $F_{\beta,\theta}(\omega,x) = F_{\beta,\theta}^{\lambda_o}(\omega,x)$ for notational brevity. We give two lemmas which are elementary but of central importance in our analysis both when $c=0$ and $c>0$.

\begin{lemma}\label{lem:simlaz}
	Assume that $\beta<\Lambda_\beta^U(\theta)$. Then, the following bounds hold for every $\omega\in\Omega$ and $x\in\mathbb{R}$.
	\begin{equation}%\label{eq:derbound}
	\theta + (F_{\beta,\theta})'(\omega,x) \in \begin{cases}
	[\sqrt{2(\lambda_o(\beta,\theta) - \beta)},\sqrt{2\lambda_o(\beta,\theta)}] & \text{if $\theta > 0$,}\\
	[-\sqrt{2\lambda_o(\beta,\theta)},-\sqrt{2(\lambda_o(\beta,\theta) - \beta)}] & \text{if $\theta < 0$.}\end{cases}\nonumber
	\end{equation}
\end{lemma}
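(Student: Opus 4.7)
The plan is to reduce the pointwise estimate to an infinitesimal one by exploiting the cocycle property \eqref{eq:cocycle} together with the explicit bounds on $v_\beta^{\lambda_o}$ provided by \eqref{eq:esti}. Since both sides of the target inequality depend only on the ``local'' behavior of $F_{\beta,\theta}$ near $x$, the homogeneity provided by $T_x$ will let me reduce everything to computing a one-sided derivative at the origin.

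First, I differentiate the cocycle identity $F_{\beta,\theta}(\omega, x+y) = F_{\beta,\theta}(\omega,x) + F_{\beta,\theta}(T_x\omega, y)$ in $y$ at $y=0$, which is legitimate by the $C^2$-regularity from Lemma \ref{lem:cell}. This gives $(F_{\beta,\theta})'(\omega,x) = (F_{\beta,\theta})'(T_x\omega,0)$, so it suffices to prove the asserted bounds for $x=0$ and then apply the resulting inequality with $\omega$ replaced by $T_x\omega$. Moreover, since $F_{\beta,\theta}(\omega,0)=0$ by \eqref{eq:defF}, I can compute $(F_{\beta,\theta})'(\omega,0)$ as $\lim_{y\to 0^+} F_{\beta,\theta}(\omega,y)/y$.

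For $\theta>0$ and $y>0$, the first branch of \eqref{eq:defF} yields
\[
\theta + \frac{F_{\beta,\theta}(\omega,y)}{y} \;=\; -\frac{\log v_\beta^{\lambda_o}(\omega,0;y)}{y},
\]
and the bound \eqref{eq:esti} (applied with $x=0$ and the above $y$) shows that the right-hand side lies in $[\sqrt{2(\lambda_o-\beta)},\sqrt{2\lambda_o}]$ for \emph{every} $y>0$. Passing to the limit $y\to 0^+$ preserves membership in this closed interval and yields the desired estimate. The case $\theta<0$ is entirely parallel: the second branch of \eqref{eq:defF} instead produces
\[
\theta + \frac{F_{\beta,\theta}(\omega,y)}{y} \;=\; \frac{\log v_\beta^{\lambda_o}(\omega,y;0)}{y},
\]
which, by the same invocation of \eqref{eq:esti}, lies in $[-\sqrt{2\lambda_o},-\sqrt{2(\lambda_o-\beta)}]$. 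No genuine obstacle arises; the only point one has to notice is that \eqref{eq:esti} is really an infinitesimal derivative bound in disguise, and that the cocycle property \eqref{eq:cocycle} is the device that propagates this bound from the origin to every $x\in\mathbb{R}$.
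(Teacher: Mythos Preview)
Your proof is correct and follows essentially the same approach as the paper: both use the cocycle property \eqref{eq:cocycle} to express the derivative at $x$ as a limit of difference quotients $F_{\beta,\theta}(T_x\omega,y)/y$, and then invoke the bounds \eqref{eq:esti} on those quotients before sending $y\to 0$. The only cosmetic difference is that you first reduce to $x=0$ by differentiating the cocycle identity, whereas the paper writes the difference quotient at $x$ directly as $F_{\beta,\theta}(T_x\omega,y)/y$.
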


\begin{proof}
	For every $\omega\in\Omega$, $x\in\mathbb{R}$ and $y>0$, we use the cocycle property \eqref{eq:cocycle} to write
	\[\frac1{y}\left(F_{\beta,\theta}(\omega,x+y) - F_{\beta,\theta}(\omega,x)\right) = \frac1{y}F_{\beta,\theta}(T_x\omega,y),\]
	recall \eqref{eq:esti}-\eqref{eq:defF}, send $y\to0$ and deduce the desired bounds.
\end{proof}

\begin{lemma}\label{lem:sublin}
	If $\beta<\Lambda_\beta^U(\theta)$, then $\sup\{|F_{\beta,\theta}(\omega,x)|: |x|\le t\} = o(t)$ for $\mathbb{P}$-a.e.\ $\omega$.
\end{lemma}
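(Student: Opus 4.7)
The plan is to represent $F_{\beta,\theta}(\omega,\cdot)$ as an ergodic integral along the flow $\{T_s\}_{s\in\mathbb{R}}$, extract pointwise sublinearity from the continuous-parameter Birkhoff theorem, and then upgrade to the supremum via the global Lipschitz bound on $F_{\beta,\theta}(\omega,\cdot)$ furnished by Lemma \ref{lem:simlaz}.

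First, I would differentiate the cocycle identity \eqref{eq:cocycle} in the second slot at $h=0$---legitimate by the $C^2$ regularity from Lemma \ref{lem:cell}---to obtain $(F_{\beta,\theta})'(\omega,s)=g(T_s\omega)$, where $g(\omega):=(F_{\beta,\theta})'(\omega,0)$. Integrating from $0$ and using $F_{\beta,\theta}(\omega,0)=0$ (immediate from \eqref{eq:defF}) then yields
\[F_{\beta,\theta}(\omega,x)=\int_0^x g(T_s\omega)\,ds,\qquad x\in\mathbb{R}.\]
Lemma \ref{lem:simlaz} bounds $|g|$ by a deterministic constant $C=C(\beta,\theta)$, so $g\in L^\infty(\mathbb{P})$; Fubini together with stationarity give $\mathbb{E}[g]=\mathbb{E}[F_{\beta,\theta}(\cdot,1)]=0$ thanks to Lemma \ref{lem:minzer}. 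The continuous-parameter Birkhoff ergodic theorem, applicable because the flow $\{T_s\}$ is ergodic, then delivers $t^{-1}F_{\beta,\theta}(\omega,t)\to 0$ as $|t|\to\infty$ for $\mathbb{P}$-a.e.\ $\omega$.

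Finally, the above representation makes $F_{\beta,\theta}(\omega,\cdot)$ globally $C$-Lipschitz with $F_{\beta,\theta}(\omega,0)=0$. For $\omega$ in the full-measure event from the previous step and any $\epsilon>0$, I would choose $M=M(\omega,\epsilon)$ such that $|F_{\beta,\theta}(\omega,x)|\le\epsilon|x|$ whenever $|x|\ge M$, so that
\[\sup_{|x|\le t}|F_{\beta,\theta}(\omega,x)|\le CM+\epsilon t\qquad\text{for }t\ge M.\]
Dividing by $t$, letting $t\to\infty$ and then $\epsilon\to 0$ closes the argument. I do not foresee a genuine obstacle; the only subtlety worth flagging is that the time-one map $T_1$ need not inherit ergodicity from the flow $\{T_s\}_{s\in\mathbb{R}}$, so one should invoke the continuous-parameter Birkhoff theorem rather than its discrete analogue.
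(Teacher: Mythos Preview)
Your argument is correct and follows the same skeleton as the paper's proof: combine the cocycle property with Lemma~\ref{lem:minzer} and an ergodic theorem to get $F_{\beta,\theta}(\omega,x)=o(|x|)$, then pass to the supremum using that $|F_{\beta,\theta}(\omega,\cdot)|$ is bounded on compacts. The paper's version is terser---it simply cites ``the cocycle property and the Birkhoff ergodic theorem'' without writing out the integral representation or the Lipschitz bound---while you make both explicit. Your remark about invoking the continuous-parameter Birkhoff theorem rather than the discrete one is a genuine clarification: the paper does not say which version it uses, and since ergodicity of the flow $\{T_s\}_{s\in\mathbb{R}}$ need not pass to the time-one map $T_1$, your route is the clean one (the discrete route can be salvaged by showing that the $T_1$-invariant limit is in fact flow-invariant via the Lipschitz bound, but this is extra work the paper omits).
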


\begin{proof}
	For $\mathbb{P}$-a.e.\ $\omega$, $F_{\beta,\theta}(\omega,x) = o(|x|)$ by Lemma \ref{lem:minzer}, the cocycle property \eqref{eq:cocycle} and the Birkhoff ergodic theorem. Hence, for every $\epsilon>0$, there exists a $t_0 = t_0(\epsilon)$ such that $|F_{\beta,\theta}(\omega,x)| \le \epsilon |x|$ for $|x| \ge t_0$. From this we deduce that
	\begin{align*}
	t^{-1}\sup\{|F_{\beta,\theta}(\omega,x)|: |x|\le t\} &\le t^{-1}\sup\{|F_{\beta,\theta}(\omega,x)|: |x|\le t_0\}\vee |x|^{-1}\sup\{|F_{\beta,\theta}(\omega,x)|: |x| \ge t_0\}\\
	&\le t^{-1}\sup\{|F_{\beta,\theta}(\omega,x)|: |x|\le t_0\}\vee \epsilon \le \epsilon
	\end{align*}
	for sufficiently large $t > t_0$.
\end{proof}

To summarize, the function $x\mapsto F_{\beta,\theta}(\omega,x)$ is defined whenever $\beta<\Lambda_\beta^U(\theta)$ (see Lemma \ref{lem:minzer}). For $\mathbb{P}$-a.e.\ $\omega$, it is a sublinear (at infinity) solution of
\begin{equation}\label{eq:auxODE}
\frac1{2}\left(F_{\beta,\theta}\right)'' + \frac1{2}\left(\theta + \left(F_{\beta,\theta}\right)'\right)^2 + \beta V(T_x\omega) = \lambda_o(\beta,\theta),\quad x\in\mathbb{R},
\end{equation}
by Lemmas \ref{lem:cell} and \ref{lem:sublin}. We refer to this family of functions as correctors.

\subsection{The tilted free energy}\label{subsec:tfe}

For every $t\ge0$, $\omega\in\Omega$, $\beta>0$ and $\theta\ne 0$ such that $\beta<\Lambda_\beta^U(\theta)$, let
\begin{equation}\label{eq:martan}
M_t(\omega) = M_t(\omega\,|\,\beta,\theta) = e^{\beta\int_{0}^tV(T_{X_s}\omega)ds + \theta X_t + F_{\beta,\theta}(\omega,X_t) - \lambda_o(\beta,\theta) t}.
\end{equation}
Observe that $M_0(\omega) = e^{\theta x + F_{\beta,\theta}(\omega,x)}$ when $X_0 = x$. Since
\begin{equation}\label{eq:lzmol}
0 < M_t(\omega) < e^{\theta X_t + F_{\beta,\theta}(\omega,X_t)} \le e^{\sqrt{2\lambda_o}|X_t|}
\end{equation}
by Lemma \ref{lem:simlaz}, we have $E_0[(M_t(\omega))^2] < \infty$.
Applying It\^{o}'s lemma,
\begin{align*}\frac{d M_t(\omega)}{M_t(\omega)} &= (\theta + (F_{\beta,\theta})'(\omega,X_t))dX_t\\
&\quad + \frac1{2}(F_{\beta,\theta})''(\omega,X_t) dt + \frac1{2}(\theta + (F_{\beta,\theta})'(\omega,X_t))^2 dt + (\beta V(T_{X_t}\omega) - \lambda_o(\beta,\theta))dt\\
&= (\theta + (F_{\beta,\theta})'(\omega,X_t))dX_t.
\end{align*}
The last equality uses \eqref{eq:auxODE}. Therefore, for every $\omega\in\Omega$, $(M_t(\omega))_{t\ge0}$ is a (positive) martingale with respect to $(\cG_t)_{t\ge0}$. In particular, $E_0[M_t(\omega)] = 1$ for every $t\ge0$.

We are now ready to prove the existence of the tilted free energy.

\begin{theorem}\label{thm:tiltedfe}
	Assume that $V$ satisfies \eqref{ass:wlog}, \eqref{ass:vh} and \eqref{ass:vhol}. Then, the limit in \eqref{eq:tiltedfe} exists for every $\beta>0$, $\theta\in\mathbb{R}$ and $\mathbb{P}$-a.e.\ $\omega$. Moreover, $\Lambda_\beta(\theta) = \beta$ or $\Lambda_\beta(\theta) = \lambda_o(\beta,\theta) > \beta$.
\end{theorem}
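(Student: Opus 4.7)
The plan is to split on the sign of $\Lambda_\beta^U(\theta) - \beta$, noting that $\Lambda_\beta^U(\theta) \ge \Lambda_\beta^L(\theta) \ge \beta$ automatically from Lemma \ref{lem:unifLB}. The easy case is $\Lambda_\beta^U(\theta) = \beta$: here Lemma \ref{lem:unifLB} sandwiches $\Lambda_\beta^L(\theta) = \Lambda_\beta^U(\theta) = \beta$, producing the first alternative of the theorem. This in particular covers $\theta = 0$, where $V(T_{X_s}\omega)\le 1$ gives $\Lambda_\beta^U(0)\le\beta$ directly.

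In the remaining case $\Lambda_\beta^U(\theta) > \beta$ (so necessarily $\theta\ne 0$), I would exploit the corrector $F_{\beta,\theta}$ produced by Lemma \ref{lem:minzer} and the positive mean-one martingale $M_t$ associated with it. Rewriting $E_0[M_t(\omega)] = 1$ gives the exact identity
\begin{equation*}
    E_0\left[e^{\beta \int_0^t V(T_{X_s}\omega)\,ds + \theta X_t + F_{\beta,\theta}(\omega, X_t)}\right] = e^{\lambda_o t},
\end{equation*}
where $\lambda_o = \lambda_o(\beta,\theta) \ge \Lambda_\beta^U(\theta) > \beta$. The strategy is to transfer this to the asymptotics of $E_0[e^{\beta\int_0^t V(T_{X_s}\omega)\,ds + \theta X_t}]$ by inserting and removing $e^{F_{\beta,\theta}(\omega, X_t)}$. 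Fix $\epsilon>0$ small and $K>0$ large, and split the expectations on the events $\{|X_t|\le Kt\}$ and $\{|X_t|>Kt\}$.

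On the bulk $\{|X_t|\le Kt\}$, sublinearity (Lemma \ref{lem:sublin}) furnishes a $\mathbb{P}$-a.s.\ finite $B(\omega,\epsilon)$ with $|F_{\beta,\theta}(\omega, x)|\le\epsilon|x| + B(\omega,\epsilon)$, so $e^{\pm F_{\beta,\theta}(\omega, X_t)} \le e^{\epsilon Kt + B(\omega,\epsilon)}$ there. On the tail $\{|X_t|>Kt\}$, the uniform gradient bound (Lemma \ref{lem:simlaz}) gives a deterministic $C$ with $|F_{\beta,\theta}(\omega, X_t)|\le C|X_t|$, and the standard Gaussian tail of $X_t$ under $P_0$ dominates the factor $e^{\beta t + (|\theta|+C)|X_t|}$ as soon as $K$ is large enough (any $K$ with $(K-|\theta|-C)^2/2 > \beta + \lambda_o$ suffices). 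The resulting two-sided sandwich then yields $|\Lambda_\beta^L(\theta) - \lambda_o| \vee |\Lambda_\beta^U(\theta) - \lambda_o| \le \epsilon K$, and letting $\epsilon\to 0$ delivers the second alternative $\Lambda_\beta(\theta) = \lambda_o(\beta,\theta) > \beta$.

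I expect the only genuine difficulty to be the tail step: the sublinearity of $F_{\beta,\theta}$ from Lemma \ref{lem:sublin} is only a pointwise $\mathbb{P}$-a.s.\ statement and cannot be exploited uniformly on $\{|X_t|>Kt\}$. The uniform \emph{linear} growth from Lemma \ref{lem:simlaz} is precisely what is needed on the tail, and the price---a factor $e^{C|X_t|}$---is absorbed by the sub-Gaussian concentration of $X_t/t$ as soon as $K$ is taken large. Balancing $\epsilon$, $K$, and the Gaussian decay is the entirety of the real work; everything else is packaged into the construction of $F_{\beta,\theta}$ and $M_t$ already done.
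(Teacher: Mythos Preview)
Your proposal is correct and follows essentially the same approach as the paper: the same case split on $\Lambda_\beta^U(\theta)=\beta$ versus $>\beta$, the same use of the martingale identity $E_0[M_t(\omega)]=1$, and the same decomposition into a bulk $\{|X_t|\le Kt\}$ (handled via the sublinearity of $F_{\beta,\theta}$, Lemma~\ref{lem:sublin}) and a tail $\{|X_t|>Kt\}$ (handled via the linear bound of Lemma~\ref{lem:simlaz} combined with the Gaussian tail of $X_t$). The only cosmetic difference is that the paper invokes Lemma~\ref{lem:sublin} in its uniform form $\sup_{|x|\le t}|F_{\beta,\theta}(\omega,x)|=o(t)$ and thereby works with a single cutoff parameter $a$ and an $e^{o(t)}$ error, whereas you reformulate sublinearity as $|F_{\beta,\theta}(\omega,x)|\le\epsilon|x|+B(\omega,\epsilon)$ and carry the extra parameter $\epsilon$; these are equivalent.
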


\begin{proof}
	If $\Lambda_\beta^U(\theta) = \beta$, then we are done by Lemma \ref{lem:unifLB}. (This is the case, e.g., when $\theta = 0$. See Proposition \ref{prop:list} below.) Otherwise, for every $t>0$, $a>0$ and $\mathbb{P}$-a.e.\ $\omega$,
	\begin{align*}
	E_0\left[e^{\beta\int_{0}^tV(T_{X_s}\omega)ds + \theta X_t}\one_{\{|X_t|\le at\}}\right] &= E_0\left[e^{\beta\int_{0}^tV(T_{X_s}\omega)ds + \theta X_t + F_{\beta,\theta}(\omega,X_t)}\one_{\{|X_t|\le at\}}\right]e^{o(t)}\\
	&= E_0\left[M_t(\omega)\one_{\{|X_t|\le at\}}\right]e^{\lambda_o(\beta,\theta)t + o(t)}\\
	&= \left(1 - E_0\left[M_t(\omega)\one_{\{|X_t| > at\}}\right]\right)e^{\lambda_o(\beta,\theta)t + o(t)}
	\end{align*}
	by Lemma \ref{lem:sublin}, where the $o(t)$ term depends on $a$ and $\omega$. Note that
	\begin{align*}
	0&\le E_0\left[e^{\beta\int_{0}^tV(T_{X_s}\omega)ds + \theta X_t}\one_{\{|X_t|> at\}}\right] \le e^{t(\beta - a)}E_0\left[e^{(|\theta| + 1)|X_t|}\right] \le 2e^{t(\beta - a + (|\theta| + 1)^2)}\quad\text{and}\\
	0&\le E_0\left[M_t(\omega)\one_{\{|X_t| > at\}}\right] \le E_0\left[e^{\sqrt{2\lambda_o}|X_t|}\one_{\{|X_t| > at\}}\right] \le e^{-at}E_0\left[e^{(\sqrt{2\lambda_o} + 1)|X_t|}\right] \le 2e^{t((\sqrt{2\lambda_o} + 1)^2 - a)}
	\end{align*}
	by \eqref{eq:lzmol}. Choosing $a$ sufficiently large concludes the proof.
\end{proof}

The following proposition (whose proof is deferred to Appendix \ref{app:prop}) lists some properties of the tilted free energy.

\begin{proposition}\label{prop:list}
	Assume that $V$ satisfies the conditions in Theorem~\ref{thm:tiltedfe}. Then, the following properties are true.
	\begin{itemize}
		\item [(a)] $\Lambda_\beta(\theta)$ is increasing in $\beta$, and even and convex in $\theta$.
		\item [(b)] $\max\left\{\beta,\theta^2/2\right\} \le \Lambda_\beta(\theta) \le \beta + \theta^2/2$. 
		\item [(c)] $\{\theta\in\mathbb{R}:\,\Lambda_\beta(\theta) = \beta\}$ is a symmetric and closed interval with nonempty interior.
		\item [(d)] The function $\theta\mapsto\Lambda_\beta(\theta)$ is $C^1$ on the complement of $\{\theta\in\mathbb{R}:\,\Lambda_\beta(\theta) = \beta\}$.
	\end{itemize}
\end{proposition}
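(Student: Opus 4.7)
Properties (a) and (b) are mostly immediate from the representation \eqref{eq:tiltedfe}. Monotonicity in $\beta$ follows from $V\ge 0$ $\mathbb{P}$-a.s., which makes the integrand pointwise non-decreasing in $\beta$. Convexity in $\theta$ follows from H\"older's inequality applied to the inner expectation with conjugate exponents $1/(1-\mu)$ and $1/\mu$ on convex combinations $\theta=(1-\mu)\theta_1+\mu\theta_2$, followed by $\frac1t\log$ and $t\to\infty$. Evenness in $\theta$ follows from the BM reflection $X_s\to-X_s$ inside $E_0$, which gives
\[
E_0\!\left[e^{\beta\int_0^t V(T_{X_s}\omega)ds+\theta X_t}\right]=E_0\!\left[e^{\beta\int_0^t V(T_{-X_s}\omega)ds-\theta X_t}\right],
\]
and the almost sure convergence to a deterministic limit then identifies $\Lambda_\beta(\theta)$ with $\Lambda_\beta(-\theta)$. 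For (b), $\Lambda_\beta(\theta)\ge\beta$ is Lemma \ref{lem:unifLB}; $\Lambda_\beta(\theta)\ge\theta^2/2$ follows from $V\ge 0$ and $E_0[e^{\theta X_t}]=e^{\theta^2 t/2}$; and the upper bound $\Lambda_\beta(\theta)\le\beta+\theta^2/2$ uses $V\le 1$ together with the same moment generating function.

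For (c), convexity of $\Lambda_\beta$ makes $I:=\{\theta:\Lambda_\beta(\theta)=\beta\}$ a closed interval, evenness gives symmetry, and $0\in I$ by (b). The substantive point is that $I$ has nonempty interior; I would argue this by contradiction using the corrector setup. For $\theta>0$, direct computation gives
\[
\mathbb{E}[F^\beta_{\beta,\theta}(\cdot,1)]=-\mathbb{E}[\log v_\beta^\beta(\cdot,0;1)]-\theta=:A-\theta.
\]
Assumption \eqref{ass:vh} with any $h<1$ and $y=1$ produces an event of positive $\mathbb{P}$-measure on which $V(T_x\omega)\le h<1$ for $x\in[0,1]$. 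On such $\omega$, BM spends positive Lebesgue time in $[0,1]$ before $\tau_1$ $P_0$-almost surely (path continuity forces BM to cross every $y\in[0,1]$ on its way from $0$ to $1$, so its local time at $\tau_1$ is positive on $[0,1]$), which makes $\int_0^{\tau_1}(1-V(T_{X_s}\omega))ds>0$ and hence $v_\beta^\beta(\omega,0;1)<1$ strictly; combined with $v_\beta^\beta\le 1$ always, this yields $A>0$. Then for any $\theta\in(0,A)$, $\mathbb{E}[F^\beta_{\beta,\theta}(\cdot,1)]=A-\theta>0$, in direct contradiction with Lemma \ref{lem:dombr} (which would give $\le 0$ under $\beta<\Lambda^U_\beta(\theta)$). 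Hence $\Lambda^U_\beta(\theta)\le\beta$, and combined with Lemma \ref{lem:unifLB}, $\Lambda_\beta\equiv\beta$ on $(0,A)$; evenness extends this to $(-A,A)$.

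For (d), on the complement $J$ of $I$ we have $\Lambda_\beta(\theta)=\lambda_o(\beta,\theta)$ with $\lambda_o>\beta$ strictly (Theorem \ref{thm:tiltedfe}). The plan is to apply the implicit function theorem to $G(\theta,\lambda):=\mathbb{E}[F^\lambda_{\beta,\theta}(\cdot,1)]$, which for $\theta>0$ equals $-\mathbb{E}[\log v_\beta^\lambda(\cdot,0;1)]-\theta$ (analogously for $\theta<0$). Then $\partial_\theta G=-1$, and $\partial_\lambda G$ is continuous and strictly positive by differentiating under the expectation: the uniform lower bound $v_\beta^\lambda(\cdot,0;1)\ge e^{-\sqrt{2\lambda}}$ from \eqref{eq:esti} together with the finiteness of $E_0[\tau_1 e^{-(\lambda-\beta)\tau_1}]$ for $\lambda>\beta$ provides the necessary domination. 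The IFT then yields $\lambda_o\in C^1(J)$. The step I expect to be the main technical obstacle is the nonempty-interior part of (c), specifically the clean passage from the positive-probability valley event to $A>0$ and the extraction of $\Lambda^U_\beta\le\beta$ from Lemma \ref{lem:dombr}.
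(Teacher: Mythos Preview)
Your proposal is correct and follows essentially the same route as the paper's proof: parts (a) and (b) are argued identically, part (c) proves $-\mathbb{E}[\log v_\beta^\beta(\cdot,0;1)]>0$ via the $h$-valley assumption and then concludes $\Lambda_\beta\equiv\beta$ on a neighborhood of the origin (you invoke Lemma~\ref{lem:dombr} by contradiction whereas the paper reuses the series bound from its proof directly, which is only a cosmetic difference), and part (d) applies the implicit function theorem to $G(\theta,\lambda)=\mathbb{E}[F_{\beta,\theta}^\lambda(\cdot,1)]$ exactly as the paper does.
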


Proposition \ref{prop:list}(d) is included here as an application of the implicit formula we provide for $\Lambda_\beta(\theta)$ in Theorem \ref{thm:tiltedfe}, and it is not used anywhere in the paper. Note that this proposition does not answer the question of whether the function $\theta\mapsto\Lambda_\beta(\theta)$ is differentiable at the endpoints of the interval $\{\theta\in\mathbb{R}:\,\Lambda_\beta(\theta) = \beta\}$. A negative answer to this question is given in \cite[Appendix D]{YZ17} in the discrete setting.

\section{Upper bounds}\label{sec:UB}

Our next goal is to prove that the limit 
\begin{equation}\label{eq:liman}
\overline H_{\beta,c}(\theta) = \lim_{t\to\infty}\inf_{\alpha\in\mathcal{P}_c}\frac1{t}\log E_0\left[e^{\beta \int_{0}^t V(T_{X_s^\alpha}\omega)ds + \theta X_t^\alpha}\right]
\end{equation}
exists for every $\beta>0$, $c>0$, $\theta\in\mathbb{R}$ and $\mathbb{P}$-a.e.\ $\omega$.
To this end, we define
\begin{equation}\label{al:daif2}
\begin{aligned}
\overline H_{\beta,c}^L(\theta) &= \liminf_{t\to\infty}\inf_{\alpha\in\mathcal{P}_c}\frac1{t}\log E_0\left[e^{\beta \int_{0}^t V(T_{X_s^\alpha}\omega)ds + \theta X_t^\alpha}\right]\\
\text{and}\hspace{15mm}  &\hspace{70mm} \\
\overline H_{\beta,c}^U(\theta) &= \limsup_{t\to\infty}\inf_{\alpha\in\mathcal{P}_c}\frac1{t}\log E_0\left[e^{\beta \int_{0}^t V(T_{X_s^\alpha}\omega)ds + \theta X_t^\alpha}\right].
\end{aligned}
\end{equation}
In this section, we give upper bounds for $\overline H_{\beta,c}^U(\theta)$ by considering specific (families of) policies. In Section \ref{sec:LB}, we obtain matching lower bounds for $\overline H_{\beta,c}^L(\theta)$ and thereby infer that the specific policies considered in Section \ref{sec:UB} are in fact asymptotically optimal. We assume throughout these two sections that $V$ satisfies \eqref{ass:wlog}, \eqref{ass:vh} and \eqref{ass:vhol}.

\subsection{Bang-bang policies}\label{subsec:BB}

For every $c>0$, let
\[{\cP}_c^{BB} = \{\alpha=(\alpha_s)_{s\ge 0}: \ \alpha\ \text{is $\pm c$-valued and $\mathcal {G}_s$-progressively measurable}\},\]
the set of admissible bang-bang policies. For every $\alpha\in\mathcal{P}_c^{BB}$, the It\^o integral $\int_0^t \alpha_s dX_s$ defines a martingale with respect to $(\cG_t)_{t\ge0}$. Since $\langle \int_0^\cdot \alpha_s dX_s\rangle_t = c^2t$, Novikov's sufficient condition for Girsanov's theorem (see \cite[Section 3.5D]{KS91}) is easily satisfied and
\[E_0\left[e^{\beta \int_{0}^t V(T_{X_s^\alpha}\omega)ds + \theta X_t^\alpha}\right] = E_0\left[e^{\beta \int_{0}^t V(T_{X_s}\omega)ds + \int_{0}^t \alpha_sdX_s + \theta X_t}\right]e^{-\frac1{2}c^2t}.\]
Therefore,
\begin{equation}\label{al:daifbang4}
\overline H_{\beta,c}^U(\theta) \le \limsup_{t\to\infty}\inf_{\alpha\in\mathcal{P}_c^{BB}}\frac1{t}\log E_0\left[e^{\beta \int_{0}^t V(T_{X_s}\omega)ds + \int_{0}^t \alpha_sdX_s + \theta X_t}\right] - \frac1{2}c^2.
\end{equation}

\subsection{General upper bound}

Substituting the bang-bang policies $\aleft\equiv -c$ and $\aright\equiv c$ into the RHS of \eqref{al:daifbang4}, we readily get the following bound:
\begin{equation}\label{eq:CUB1}
\overline H_{\beta,c}^U(\theta) \le \min\{\Lambda_\beta(\theta - c), \Lambda_\beta(\theta + c)\} - \frac1{2}c^2.
\end{equation}

\subsection{Upper bound when $|\theta|\le c$}

For every $h\in(0,1)$, $y>0$ and $\mathbb{P}$-a.e.\ $\omega$, assumption \eqref{ass:vh} and the Birkhoff ergodic theorem ensure the existence of an $h$-valley of length $2y$ centered at some $x_*\in\mathbb{R}$. Consider the policy
$\alpha^{(x_*,h,y)}$ defined as
\[\alpha_s^{(x_*,h,y)} = - c\,\sign\left(X_s^{\alpha^{(x_*,h,y)}}-x_*\right).\]
Here, $X_s^{\alpha^{(x_*,h,y)}}$ denotes the unique strong solution of \eqref{eq:SDEmiz} with $\alpha_s = \alpha_s^{(x_*,h,y)}$. For one-dimensional stochastic differential equations with discontinuous drift and nondegenerate diffusion coefficients, strong existence and uniqueness follow from Zvonkin's work \cite{Zvo74} (see \cite[Chapter 5, Section 28]{RW00b}). Therefore, $\alpha^{(x_*,h,y)}\in \mathcal{P}_c^{BB}$. (We take $\sign(0) = 1$ as a convention to get a bang-bang policy.)

Assume without loss of generality that $x_* = 0$. (Starting the BM at $x_*$ instead of the origin does not change the rate of growth of the expectations below.) If $|\theta| \le c$, then
\begin{equation}\label{eq:lahm}
\begin{aligned}
E_0\left[e^{\beta \int_{0}^t V(T_{X_s}\omega)ds + \int_{0}^t \alpha_s^{(0,h,y)}dX_s + \theta X_t}\right] &= E_0\left[e^{\beta \int_{0}^t V(T_{X_s}\omega)ds - c\int_{0}^t \sign(X_s)dX_s + \theta X_t}\right]\\
= E_0\left[e^{\beta \int_{0}^t V(T_{X_s}\omega)ds + c \ell(t,0) - c|X_t| + \theta X_t}\right] &\le E_0\left[e^{\beta \int_{0}^t \one_{\{|X_s|\ge y\}}ds + c \ell(t,0)}\right]e^{\beta ht}
\end{aligned}
\end{equation}
by Tanaka's formula, where $\ell(t,0)$ is the local time of BM at $0$ up to time $t$. 

For every $t'\ge0$, let \[\sigma(t') = \inf\{t\ge0:\, L(t,[-y,y])\ge t'\},\]
where $L(t,[-y,y]) = \int_0^t \one_{\{|X_s|\le y\}}ds$. Observe that $\tilde X_{t'} := |X_{\sigma(t')}|$ is a doubly-reflected BM on $[0,y]$ and denote its local time at $0$ up to time $t'$ by $\tilde\ell^y(t',0)$.
\begin{comment}
We know from the analysis of reflected BM that
\[P_0(\ell(t,0) \ge y) = 2P_0(X_t \ge y)\]
for every $y\ge0$ (see Varadhan, Stochastic Processes, Section 5.14). Therefore,
\[E_0\left[e^{c \ell(t,0)}\right] = 2E_0\left[e^{c X_t}\one_{\{X_t\ge0\}}\right] = 2E_0\left[e^{c X_t}\right] - 2E_0\left[e^{c X_t}\one_{\{X_t < 0\}}\right]\in \left[2e^{\frac{tc^2}{2}} - 1, 2e^{\frac{tc^2}{2}}\right].\]
We deduce that
\begin{equation}\label{eq:yorgo}
\lim_{t\to\infty}\frac1{t}\log E_0\left[e^{c \ell(t,0)}\right] = \frac{c^2}{2}.
\end{equation}
\end{comment}
It follows from \cite[Proposition 3.2]{FKZ15} that the limit
\[J_y(c) := \lim_{t'\to\infty}\frac1{t'}\log E_0\left[e^{c\tilde\ell^y(t',0)}\right]\]
exists and satisfies
\begin{equation}\label{eq:nassi}
\lim_{y\to\infty}J_y(c) = \frac1{2}c^2.
\end{equation}
With this notation,
\begin{align*}
E_0\left[e^{\beta \int_{0}^t \one_{\{|X_s|\ge y\}}ds + c \ell(t,0)}\right] &= \sum_{m=0}^{[t]}E_0\left[e^{\beta \int_{0}^t \one_{\{|X_s|\ge y\}}ds + c \ell(t,0)}\one_{\{m \le L(t,[-y,y]) < m +1\}}\right]\\
&\le \sum_{m=0}^{[t]}E_0\left[e^{c \ell(t,0)}\one_{\{m \le L(t,[-y,y]) < m +1\}}\right]e^{\beta(t - m)}\\
&\le \sum_{m=0}^{[t]} E_0\left[e^{c \tilde\ell^y(m+1,0)}\right]e^{\beta(t - m)}\\
&= \sum_{m=0}^{[t]}e^{m J_y(c) + \beta(t - m) + o(m)} = e^{t\max\{\beta,J_y(c)\} + o(t)}.
\end{align*}
Using \eqref{eq:nassi}, we deduce that
%\begin{equation}\label{eq:azmazfaz}
\[\limsup_{y\to\infty}\limsup_{t\to\infty}\frac1{t}\log E_0\left[e^{\beta \int_{0}^t \one_{\{|X_s|\ge y\}}ds + c \ell(t,0)}\right] \le \max\left\{\beta,\frac1{2}c^2\right\}.\]
%\end{equation}
Finally, recalling \eqref{al:daifbang4}, \eqref{eq:lahm} and taking $h\to0$, we obtain the following bound:
\begin{equation}\label{eq:CUB2}
\overline H_{\beta,c}^U(\theta) \le \max\left\{\beta,\frac1{2}c^2\right\} - \frac1{2}c^2 = \left(\beta - \frac1{2}c^2\right)^+.
\end{equation}

\begin{comment}
\begin{remark}
The inequality in \eqref{eq:azmazfaz} is in fact an equality. For the reverse inequality, observe that
\begin{align*}
&\limsup_{t\to\infty}\frac1{t}\log E_0\left[e^{\beta \int_{0}^t \one_{\{|X_s|\ge y\}}ds + c \ell(t,0)}\right] \ge \lim_{t\to\infty}\frac1{t}\log E_0\left[e^{c \ell(t,0)}\right] = \frac{c^2}{2}\quad\text{and}\\
&\limsup_{t\to\infty}\frac1{t}\log E_0\left[e^{\beta \int_{0}^t \one_{\{|X_s|\ge y\}}ds + c \ell(t,0)}\right] \ge \lim_{t\to\infty}\frac1{t}\log E_0\left[e^{\beta \int_{0}^t \one_{\{|X_s|\ge y\}}ds}\right] = \beta
\end{align*}
for every $y>0$ by \eqref{eq:yorgo} and (the argument used in the proof of) Lemma \ref{lem:unifLB}, respectively.
\end{remark}
\end{comment}

%
%

\section{Lower bounds and the effective Hamiltonian}\label{sec:LB}

In this section, we continue assuming that $c>0$ and $V$ satisfies \eqref{ass:wlog}, \eqref{ass:vh} and \eqref{ass:vhol}.

\subsection{Uniform lower bound}

For every $\alpha\in\mathcal{P}_c$,
\begin{align}
E_0\left[e^{\beta \int_{0}^t V(T_{X_s^\alpha}\omega)ds + \theta X_t^\alpha}\right] &= E_0\left[e^{\beta \int_{0}^t V(T_{X_s}\omega)ds + \int_0^t (\theta + \alpha_s) dX_s - \frac1{2}\int_0^t\alpha_s^2ds}\right]\nonumber\\
&\ge E_0\left[e^{\beta \int_{0}^t V(T_{X_s}\omega)ds + \int_0^t (\theta + \alpha_s) dX_s}\right]e^{-\frac1{2}c^2t},\label{eq:duzgunb}
\end{align}
where the equality follows from Girsanov's theorem as in Section \ref{subsec:BB}. Let $Y_t = \int_0^t (\theta + \alpha_s) dX_s$. For every $\xi\in\mathbb{R}$,
\begin{align*}
E_0\left[e^{\xi Y_t}\right] &= E_0\left[e^{\int_0^t \xi(\theta + \alpha_s) dX_s - \frac1{2}\int_0^t \xi^2(\theta + \alpha_s)^2ds + \frac1{2}\int_0^t \xi^2(\theta + \alpha_s)^2ds}\right]\\
&\le E_0\left[e^{\int_0^t \xi(\theta + \alpha_s) dX_s - \frac1{2}\int_0^t \xi^2(\theta + \alpha_s)^2ds}\right]e^{\frac1{2}\xi^2(|\theta| + c)^2t} = e^{\frac1{2}\xi^2(|\theta| + c)^2t}.
\end{align*}
Applying the exponential Chebyshev inequality and optimizing over $\xi$, we see that
\[P_0(Y_t \le -bt) \le e^{-\frac{b^2t}{2(|\theta| + c)^2}}\]
for every $b>0$.

As we argued in the proof of the uniform lower bound when $c=0$ (Lemma \ref{lem:unifLB}), for every $h\in(0,1)$, $y>0$ and $\mathbb{P}$-a.e.\ $\omega$, there exists an $h$-hill of the form $[x^* - y,x^* + y]$ for some $x^*\in\mathbb{R}$. Using the strong Markov property and Lemma \ref{lem:eig}, we get
\[\lim_{t\to\infty}\frac1{t}\log P_0\left(\{\tau_{x^*} \le 1\}\cap\{|X_s- x^*|<y\ \text{for every}\ s\in[\tau_{x^*},t]\}\right) = - \frac{\pi^2}{8y^2}.\]
Therefore,
\[\lim_{t\to\infty}\frac1{t}\log P_0\left(\{\tau_{x^*} \le 1\}\cap\{|X_s- x^*|<y\ \text{for every}\ s\in[\tau_{x^*},t]\}\cap\{Y_t \le -bt\}^c\right) = - \frac{\pi^2}{8y^2}\]
for every $y>0$ sufficiently large (depending on $b$) so that $\frac{\pi^2}{8y^2} < \frac{b^2}{2(|\theta| + c)^2}$. Restricting the expectation on the right-hand side of \eqref{eq:duzgunb} on this intersection of events gives
\begin{align*}
&\liminf_{t\to\infty}\frac1{t}\log E_0\left[e^{\beta \int_{0}^t V(T_{X_s}\omega)ds + \int_0^t (\theta + \alpha_s) dX_s}\right]\\
\ge & \liminf_{t\to\infty}\frac1{t}\log E_0\left[e^{\beta\int_0^t V(T_{X_s}\omega)ds + Y_t}\one_{\{\tau_{x^*} \le 1\}\cap\{|X_s- x^*|<y\ \text{for every}\ s\in[\tau_{x^*},t]\}\cap\{Y_t \le -bt\}^c}\right]\\
\ge & \beta h - \frac{\pi^2}{8y^2} - b.
\end{align*}
Sending $h\to 1$, $y\to\infty$ and finally $b\to0$, we get the following bound:
\begin{equation}\label{eq:CLB1}
\overline H_{\beta,c}^L(\theta) \ge \beta - \frac1{2}c^2.
\end{equation}

\subsection{Lower bound when $\theta \ge 0$ and $\Lambda_\beta(\theta - c) > \beta$}\label{subsec:LBLB}

We start with recording a lower bound for the derivative of the relevant corrector from Section \ref{subsec:cor}.

\begin{lemma}\label{lem:aciktimm}
    If 
	\begin{itemize}
		\item[(i)] $\theta > c$ and $\Lambda_\beta(\theta - c) > \beta$ or
		\item[(ii)] $0 < \theta < c$ and $\beta < \Lambda_\beta(\theta - c) \le c^2/2$,
	\end{itemize}
	then $\theta + (F_{\beta,\theta-c})'(\omega,x) \ge 0$ for every $\omega\in\Omega$ and $x\in\mathbb{R}$.
\end{lemma}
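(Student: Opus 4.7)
The plan is to reduce both cases to Lemma \ref{lem:simlaz} applied with tilt $\tilde\theta := \theta - c$, after rewriting
\[
\theta + (F_{\beta,\theta-c})'(\omega,x) = \bigl[\tilde\theta + (F_{\beta,\tilde\theta})'(\omega,x)\bigr] + c.
\]
In both (i) and (ii) we have $\tilde\theta \ne 0$ and $\beta < \Lambda_\beta(\theta-c) \le \Lambda_\beta^U(\theta - c)$, so Lemma \ref{lem:simlaz} is indeed applicable, and moreover Theorem \ref{thm:tiltedfe} tells us $\lambda_o(\beta,\theta-c) = \Lambda_\beta(\theta-c)$ (since the alternative $\Lambda_\beta(\theta-c) = \beta$ is ruled out).

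In case (i), $\tilde\theta = \theta - c > 0$, so Lemma \ref{lem:simlaz} gives
\[
\tilde\theta + (F_{\beta,\tilde\theta})'(\omega,x) \ge \sqrt{2(\lambda_o(\beta,\tilde\theta) - \beta)} \ge 0,
\]
and adding $c>0$ yields the claim with room to spare. This case is essentially immediate.

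The substantive case is (ii), where $\tilde\theta = \theta - c < 0$. Here Lemma \ref{lem:simlaz} gives only the one-sided lower bound
\[
\tilde\theta + (F_{\beta,\tilde\theta})'(\omega,x) \ge -\sqrt{2\lambda_o(\beta,\tilde\theta)},
\]
so that $\theta + (F_{\beta,\theta-c})'(\omega,x) \ge c - \sqrt{2\lambda_o(\beta,\theta-c)}$. This is nonnegative exactly when $\lambda_o(\beta,\theta-c) \le c^2/2$, which is where the hypothesis $\Lambda_\beta(\theta-c) \le c^2/2$ gets used: combined with $\lambda_o(\beta,\theta-c) = \Lambda_\beta(\theta-c)$ (from the preceding paragraph) it yields exactly the required inequality.

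The main (and really only) potential obstacle is the identification $\lambda_o(\beta,\theta-c) = \Lambda_\beta(\theta-c)$; without the dichotomy supplied by Theorem \ref{thm:tiltedfe}, the bound in case (ii) would involve a free parameter $\lambda_o \ge \Lambda_\beta(\theta-c)$ that the hypothesis cannot control. Everything else is just bookkeeping around the sign of $\tilde\theta$.
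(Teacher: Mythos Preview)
Your proof is correct and follows essentially the same approach as the paper's: both apply Lemma \ref{lem:simlaz} with tilt $\theta-c$, invoke Theorem \ref{thm:tiltedfe} to identify $\lambda_o(\beta,\theta-c) = \Lambda_\beta(\theta-c)$, and then read off the two cases according to the sign of $\theta-c$. Your write-up is somewhat more expansive in flagging where each hypothesis is used, but the argument is the same.
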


\begin{proof}
	This is an immediate corollary of Lemma \ref{lem:simlaz} now that we know from Theorem \ref{thm:tiltedfe} that $\Lambda_\beta^U(\theta - c) = \Lambda_\beta(\theta - c) = \lambda_o(\beta,\theta-c)$. Indeed, if (i) holds, then
	\[(\theta -c) + (F_{\beta,\theta-c})'(\omega,x) \ge \sqrt{2(\Lambda_\beta(\theta - c) - \beta)} > 0.\]
	Similarly, if (ii) holds, then
	\[(\theta -c) + (F_{\beta,\theta-c})'(\omega,x) \ge -\sqrt{2\Lambda_\beta(\theta - c)} \ge -c.\qedhere\]
\end{proof}

\begin{lemma}\label{lem:LBLB}
	If condition (i) or condition (ii) in Lemma \ref{lem:aciktimm} holds, then
	\begin{equation}\label{eq:CLB2}
	\overline H_{\beta,c}^L(\theta) \ge \Lambda_\beta(\theta - c) - \frac1{2}c^2.
	\end{equation}
\end{lemma}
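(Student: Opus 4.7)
The idea is to adapt the corrector/exponential-martingale technique from Section \ref{sec:nocon} to the controlled setting. Under either hypothesis, Theorem \ref{thm:tiltedfe} yields $\lambda_o(\beta,\theta-c) = \Lambda_\beta(\theta-c) > \beta$, so the corrector $F := F_{\beta,\theta-c}$ is well defined and solves \eqref{eq:auxODE} for $\mathbb{P}$-a.e.\ $\omega$. Writing $L := \Lambda_\beta(\theta-c) - c^2/2$ for the target lower bound, the strategy is, for any $\alpha \in \mathcal{P}_c$, to introduce
\[N_t^\alpha := \exp\Bigl(\beta \int_0^t V(T_{X_s^\alpha}\omega)\,ds + \theta X_t^\alpha + F(\omega,X_t^\alpha) - L t\Bigr),\]
show that $N_t^\alpha$ is a submartingale under $P_0$, and then transfer the bound $E_0[N_t^\alpha] \ge N_0^\alpha = 1$ back to the target expectation using the sublinearity of $F$.

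To verify the submartingale property, apply It\^o's formula to $\log N_t^\alpha$, using $dX_t^\alpha = \alpha_t\,dt + dX_t$, $\langle X^\alpha \rangle_t = t$, and substitute \eqref{eq:auxODE} with $\lambda_o = L + c^2/2$ to kill the $\frac{1}{2}F'' + \beta V$ terms. The key algebraic cancellation $\frac{1}{2}(\theta + F')^2 - \frac{1}{2}((\theta-c) + F')^2 = c(\theta + F') - c^2/2$ then collapses the drift into the particularly clean form
\[\frac{dN_t^\alpha}{N_t^\alpha} = (\theta + F'(\omega,X_t^\alpha))\,dX_t + (\theta + F'(\omega,X_t^\alpha))(\alpha_t + c)\,dt.\]
By Lemma \ref{lem:aciktimm}, the factor $\theta + F'(\omega,\cdot)$ is non-negative under (i) or (ii), and $\alpha_t + c \ge 0$ for every admissible $\alpha$. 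Hence the bounded-variation part is non-decreasing, and $N_t^\alpha$ is a non-negative local submartingale. The linear bound $|F(\omega,x)| \le C|x|$ supplied by Lemma \ref{lem:simlaz}, combined with $|X_t^\alpha| \le ct + |X_t|$, yields $E_0[(N_t^\alpha)^p] < \infty$ for every $p \ge 1$, so the localizing sequence is uniformly integrable and $E_0[N_t^\alpha] \ge 1$ for every $t > 0$.

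To convert this into a bound on $E_0[e^{\beta \int_0^t V\,ds + \theta X_t^\alpha}] = e^{Lt}\,E_0[N_t^\alpha\,e^{-F(\omega, X_t^\alpha)}]$, I would truncate on $\{|X_t^\alpha| \le at\}$ for a sufficiently large constant $a > c$ depending only on $\beta,\theta,c$. On this event, Lemma \ref{lem:sublin} gives $|F(\omega, X_t^\alpha)| \le \sup_{|x|\le at}|F(\omega,x)| = o(t)$ for $\mathbb{P}$-a.e.\ $\omega$, while the complementary contribution is estimated by
\[E_0[N_t^\alpha\,\one_{\{|X_t^\alpha| > at\}}] \le e^{(\beta - L)t + (|\theta|+C)ct}\,E_0\bigl[e^{(|\theta|+C)|X_t|}\,\one_{\{|X_t| > (a-c)t\}}\bigr],\]
using $|F(\omega,x)| \le C|x|$ and $|X_t^\alpha - X_t| \le ct$. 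Exponential Chebyshev applied to the Gaussian factor makes the right-hand side decay exponentially fast as $a\to\infty$, hence smaller than $1/2$ for $a$ and $t$ large. Consequently $E_0[N_t^\alpha\,\one_{\{|X_t^\alpha| \le at\}}] \ge 1/2$ and therefore
\[\tfrac{1}{t}\log E_0\bigl[e^{\beta \int_0^t V\,ds + \theta X_t^\alpha}\bigr] \ge L - o(1),\]
with an $o(1)$ depending only on $\omega$ and $a$. Taking the infimum over $\alpha$ and $\liminf$ as $t\to\infty$ then gives \eqref{eq:CLB2}.

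The main obstacle, beyond the routine but delicate It\^o computation, is ensuring that both the sublinearity error and the tail-truncation error are uniform in $\alpha \in \mathcal{P}_c$. Uniformity of the sublinearity error is automatic because Lemma \ref{lem:sublin} is a pointwise-in-$\omega$ statement about $F$ on the fixed ball $\{|x|\le at\}$, independent of the control. Uniformity of the tail bound is achieved by reducing tails of the controlled process $X_t^\alpha$ to Gaussian tails of the uncontrolled BM $X_t$ via the deterministic bound $|X_t^\alpha - X_t| \le ct$; this is what makes the truncation argument work for all $\alpha$ simultaneously.
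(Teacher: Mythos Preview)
Your proposal is correct and follows essentially the same approach as the paper: the paper defines the identical exponential process (calling it $M_t^\alpha$), performs the same It\^o computation to obtain $dM_t^\alpha/M_t^\alpha = (\theta + (F_{\beta,\theta-c})')\bigl((\alpha_t+c)\,dt + dX_t\bigr)$, invokes Lemma~\ref{lem:aciktimm} for the submartingale property, and then truncates on $\{|X_t^\alpha|\le at\}$ exactly as you do. Your treatment of the truncation step is in fact slightly more explicit than the paper's (which defers the tail estimates to ``as in the proof of Theorem~\ref{thm:tiltedfe}''), and your observation that uniformity in $\alpha$ comes from the deterministic bound $|X_t^\alpha - X_t|\le ct$ is precisely the point.
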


\begin{proof}
Assume that (i) or (ii) in Lemma \ref{lem:aciktimm} holds. For every $\alpha\in\mathcal{P}_c$, $t\ge0$ and $\omega\in\Omega$, let
\begin{equation}\label{eq:submartan}
M_t^\alpha(\omega) = M_t^\alpha(\omega\,|\,\beta,c,\theta) = e^{\beta \int_{0}^t V(T_{X_s^\alpha}\omega)ds + \theta X_t^\alpha + F_{\beta,\theta-c}(\omega,X_t^\alpha) - \left(\Lambda_\beta(\theta - c) - \frac1{2}c^2\right)t}.
\end{equation}
Observe that $M_0^\alpha(\omega) = e^{\theta x + F_{\beta,\theta-c}(\omega,x)}$ when $X_0^\alpha = x$. Since
%\begin{equation}\label{eq:spacem}
\[0 < M_t^\alpha(\omega) < %e^{\theta X_t^\alpha + F_{\beta,\theta-c}(\omega,X_t^\alpha) + \frac1{2}c^2t} =
e^{c X_t^\alpha + (\theta-c) X_t^\alpha + F_{\beta,\theta-c}(\omega,X_t^\alpha) + \frac1{2}c^2t} \le e^{c X_t^\alpha + \sqrt{2\Lambda_\beta(\theta - c)}|X_t^\alpha| + \frac1{2}c^2t}\]
%\end{equation}
by Lemma \ref{lem:simlaz}, we have $E_0[(M_t^\alpha(\omega))^2] < \infty$. Applying It\^{o}'s lemma (and using $d\langle X_\cdot^\alpha\rangle_t = dt$), we see that
\begin{align*}
\frac{d M_t^\alpha(\omega)}{M_t^\alpha(\omega)} &= \left(\beta V(T_{X_t^\alpha}\omega) - \Lambda_\beta(\theta - c) + \frac1{2}c^2\right)dt + (\theta + (F_{\beta,\theta-c})'(\omega,X_t^\alpha))(\alpha_t dt + dX_t)\\
&\quad\ + \frac1{2}(\theta + (F_{\beta,\theta-c})'(\omega,X_t^\alpha))^2 dt + \frac1{2}(F_{\beta,\theta-c})''(\omega,X_t^\alpha) dt\\
&= \left(\beta V(T_{X_t^\alpha}\omega) - \Lambda_\beta(\theta - c)\right)dt + (\theta + (F_{\beta,\theta-c})'(\omega,X_t^\alpha))((\alpha_t + c)dt + dX_t)\\
&\quad\ + \frac1{2}(\theta - c + (F_{\beta,\theta-c})'(\omega,X_t^\alpha))^2 dt + \frac1{2}(F_{\beta,\theta-c})''(\omega,X_t^\alpha) dt\\
&= (\theta + (F_{\beta,\theta-c})'(\omega,X_t^\alpha))((\alpha_t + c) dt + dX_t).
\end{align*}
The last equality follows from \eqref{eq:auxODE} with $\theta$ replaced by $\theta-c$. Since $\theta + (F_{\beta,\theta-c})'(\omega,X_t^\alpha) \ge 0$ by Lemma \ref{lem:aciktimm} and $\alpha_t + c \ge 0$, we conclude that $(M_t^\alpha(\omega))_{t\ge0}$ is a (positive) submartingale with respect to $(\cG_t)_{t\ge0}$. In particular, $E_0[M_t^\alpha(\omega)] \ge 1$ for every $t\ge0$.

Finally, for sufficiently large $a>0$,
\begin{align*}
\overline H_{\beta,c}^L(\theta) &= \liminf_{t\to\infty}\inf_{\alpha\in\mathcal{P}_c}\frac1{t}\log E_0\left[e^{\beta \int_{0}^t V(T_{X_s^\alpha}\omega)ds + \theta X_t^\alpha}\right]\\
&= \liminf_{t\to\infty}\inf_{\alpha\in\mathcal{P}_c}\frac1{t}\log E_0\left[e^{\beta \int_{0}^t V(T_{X_s^\alpha}\omega)ds + \theta X_t^\alpha}\one_{\{|X_t^\alpha|\le at\}}\right]\\
&= \liminf_{t\to\infty}\inf_{\alpha\in\mathcal{P}_c}\frac1{t}\log E_0\left[e^{\beta \int_{0}^t V(T_{X_s^\alpha}\omega)ds + \theta X_t^\alpha + F_{\beta,\theta-c}(\omega,X_t^\alpha)}\one_{\{|X_t^\alpha|\le at\}}\right]\\
&= \liminf_{t\to\infty}\inf_{\alpha\in\mathcal{P}_c}\frac1{t}\log E_0\left[e^{\beta \int_{0}^t V(T_{X_s^\alpha}\omega)ds + \theta X_t^\alpha + F_{\beta,\theta-c}(\omega,X_t^\alpha)}\right]\\
&= \liminf_{t\to\infty}\inf_{\alpha\in\mathcal{P}_c}\frac1{t}\log \left(E_0\left[M_t^\alpha(\omega)\right]e^{\left(\Lambda_\beta(\theta - c) - \frac1{2}c^2\right)t}\right) \ge \Lambda_\beta(\theta - c) - \frac1{2}c^2.
\end{align*}
Here, the second and the fourth equalities are easily justified as in the proof of Theorem \ref{thm:tiltedfe}, and the third equality follows from the sublinearity of the corrector (Lemma \ref{lem:sublin}).
\end{proof}

It remains to consider the cases not covered by Lemma \ref{lem:LBLB}. If $\theta = 0$, then (since $V(\cdot)\ge0$) it is clear from \eqref{al:daif2} that
\begin{equation}\label{eq:CLB3}
\overline H_{\beta,c}^L(0)\ge 0.
\end{equation}
If $0 < \theta < c$ and $\beta < c^2/2 < \Lambda_\beta(\theta - c)$, then it follows from Proposition \ref{prop:list} and the intermediate value theorem that there exists a unique $\bar\theta(\beta,c)\in(\theta,c)$ such that
$$\Lambda_\beta(\bar\theta(\beta,c)-c) = \frac1{2}c^2.$$
By Proposition \ref{prop:list}(a), the map $\beta\mapsto\bar\theta(\beta,c)$ is increasing for $\beta\in(0,c^2/2)$, with $\bar{\theta}(0^+,c) = 0$, %For every $\theta\in(0,\bar\theta(\beta,c))$,
so there exists a unique $\bar\beta = \bar\beta(\theta,c) \in(0,\beta)$ such that $\bar\theta(\bar\beta,c) = \theta$. With this notation, we get the following bound:
\begin{equation}\label{eq:CLB4}
\overline H_{\beta,c}^L(\theta) \ge \overline H_{\bar\beta,c}^L(\theta) \ge \Lambda_{\bar\beta}(\theta - c) - \frac1{2}c^2 = \Lambda_{\bar\beta}(\bar\theta(\bar\beta,c) - c) - \frac1{2}c^2 = 0.
\end{equation}
Here, the first inequality uses $V(\cdot)\ge0$ and the second inequality follows from \eqref{eq:CLB2} which is now applicable since $\bar\beta < \Lambda_{\bar\beta}(\theta - c) = c^2/2$.

\subsection{The effective Hamiltonian}\label{subsec:EffH}

We are ready to check that the lower bounds we have obtained in this section match the upper bounds from Section \ref{sec:UB}.

\begin{theorem}\label{thm:control}
	Assume that $V$ satisfies \eqref{ass:wlog}, \eqref{ass:vh} and \eqref{ass:vhol}. Then, the limit in \eqref{eq:liman} exists for every $\beta>0$, $c>0$, $\theta\in\mathbb{R}$ and $\mathbb{P}$-a.e.\ $\omega$. Moreover, there are two qualitatively distinct control regimes which are characterized by the comparison of $\beta$ and $c^2/2$ (see Figure \ref{matrixfigure}).
	\begin{itemize}
		\item [(a)] (Weak control) If $\beta \ge c^2/2$, then the effective Hamiltonian is given by
		\begin{equation}\label{eq:weaklimit}
		\overline H_{\beta,c}(\theta) = \begin{cases}
		\beta - \frac1{2}c^2&\ \text{if}\ |\theta| < c,\\
		\Lambda_\beta(|\theta| - c) - \frac1{2}c^2&\ \text{if}\ |\theta| \ge c.
		\end{cases}
		\end{equation}
		\item [(b)] (Strong control) If $\beta < c^2/2$, then there exists a unique $\bar\theta(\beta,c)\in(0,c)$ such that
		$$\Lambda_\beta(\bar\theta(\beta,c)-c) = \frac1{2}c^2,$$
		and the effective Hamiltonian is given by
		\begin{equation}\label{eq:stronglimit}
		\overline H_{\beta,c}(\theta) = \begin{cases}
		0&\ \text{if}\ |\theta| < |\bar\theta(\beta,c)|,\\
		\Lambda_{\beta}(|\theta| - c) - \frac1{2}c^2&\ \text{if}\ |\theta| \ge |\bar\theta(\beta,c)|.
		\end{cases}
		\end{equation}
	\end{itemize}
\end{theorem}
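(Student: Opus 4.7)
The plan is to combine the upper bounds derived in Section \ref{sec:UB} with the lower bounds derived in Section \ref{sec:LB} and verify that they match in every case. Matching bounds simultaneously force the liminf and limsup in \eqref{al:daif2} to coincide (thereby establishing existence of the limit \eqref{eq:liman}) and identify it with the explicit formulas \eqref{eq:weaklimit}--\eqref{eq:stronglimit}. By the evenness of $\Lambda_\beta$ from Proposition \ref{prop:list}(a) and the symmetry of the control problem under $(X,\alpha,\theta)\mapsto(-X,-\alpha,-\theta)$, I restrict to $\theta\ge 0$. Denote by $[-\theta_0,\theta_0]$ the flat piece of $\Lambda_\beta$ provided by Proposition \ref{prop:list}(c); since $\theta_0>0$, we have $\Lambda_\beta(0)=\beta$, and convexity together with $\Lambda_\beta>\beta$ outside the flat piece implies that $\Lambda_\beta$ is strictly increasing on $[\theta_0,\infty)$.

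\textbf{Weak control} ($\beta\ge c^2/2$). For $0\le\theta<c$, the upper bound \eqref{eq:CUB2} gives $\overline H_{\beta,c}^U(\theta)\le\beta-c^2/2$, and \eqref{eq:CLB1} supplies the matching lower bound. For $\theta\ge c$, the inequality $|\theta-c|\le\theta+c$ together with the evenness of $\Lambda_\beta$ and its monotonicity on $[0,\infty)$ shows that the minimum in \eqref{eq:CUB1} equals $\Lambda_\beta(\theta-c)-c^2/2$. For the matching lower bound, I split on the location of $\theta-c$: if $\theta-c>\theta_0$, then $\Lambda_\beta(\theta-c)>\beta$, condition (i) of Lemma \ref{lem:aciktimm} is satisfied, and \eqref{eq:CLB2} closes the case; if $\theta-c\in[0,\theta_0]$, then $\Lambda_\beta(\theta-c)=\beta$ and \eqref{eq:CLB1} yields $\beta-c^2/2=\Lambda_\beta(\theta-c)-c^2/2$.

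\textbf{Strong control} ($\beta<c^2/2$). The existence of $\bar\theta(\beta,c)\in(0,c)$ follows from the IVT applied to the continuous map $\theta\mapsto\Lambda_\beta(\theta-c)$ on $[0,c]$ (using $\Lambda_\beta(c)\ge c^2/2>\beta=\Lambda_\beta(0)$), while uniqueness follows from the strict monotonicity noted above. Moreover $\bar\theta<c-\theta_0$, since $\Lambda_\beta(\bar\theta-c)=c^2/2>\beta$ forces $|\bar\theta-c|>\theta_0$. For $0\le\theta<\bar\theta(\beta,c)$, the upper bound \eqref{eq:CUB2} gives $(\beta-c^2/2)^+=0$, and the lower bound $0$ is delivered by \eqref{eq:CLB3} when $\theta=0$ and by \eqref{eq:CLB4} when $0<\theta<\bar\theta<c$ (the hypothesis $\beta<c^2/2<\Lambda_\beta(\theta-c)$ holds for such $\theta$ by the strict monotonicity of $\Lambda_\beta$ past $\theta_0$). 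For $\theta\ge\bar\theta(\beta,c)$, the upper bound $\Lambda_\beta(\theta-c)-c^2/2$ again comes from \eqref{eq:CUB1}, and I split the matching lower bound into three subcases according to where $\theta-c$ lies relative to $[-\theta_0,\theta_0]$: (a) if $\theta-c\in[-\theta_0,\theta_0]$, then $\Lambda_\beta(\theta-c)=\beta$ and \eqref{eq:CLB1} gives $\beta-c^2/2$; (b) if $\theta>c+\theta_0$, then condition (i) of Lemma \ref{lem:aciktimm} applies; (c) in the remaining range $\bar\theta\le\theta<c-\theta_0$ (nonempty by $\bar\theta<c-\theta_0$), one has $0<\theta<c$ and $\beta<\Lambda_\beta(\theta-c)\le c^2/2$, so condition (ii) of Lemma \ref{lem:aciktimm} applies. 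In subcases (b) and (c), \eqref{eq:CLB2} supplies $\Lambda_\beta(\theta-c)-c^2/2$.

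\textbf{Main obstacle.} All of the substantive estimates already live in Sections \ref{sec:UB} and \ref{sec:LB}; here the remaining task is bookkeeping. The most delicate point is the strong control analysis, where the half-line $[\bar\theta,\infty)$ has to be partitioned into three pieces corresponding to the three lower-bound mechanisms (the uniform bound \eqref{eq:CLB1} and conditions (i), (ii) of Lemma \ref{lem:aciktimm}), and one must verify that at the interface points the upper and lower bounds still agree. The continuity of the formulas \eqref{eq:weaklimit}--\eqref{eq:stronglimit} at $|\theta|=c$ (weak control) and at $|\theta|=\bar\theta(\beta,c)$ (strong control) follows, respectively, from $\Lambda_\beta(0)=\beta$ and $\Lambda_\beta(\bar\theta-c)=c^2/2$.
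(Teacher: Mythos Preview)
Your proposal is correct and follows essentially the same approach as the paper: reduce to $\theta\ge 0$ by symmetry and, in each regime, match the upper bounds \eqref{eq:CUB1}--\eqref{eq:CUB2} against the lower bounds \eqref{eq:CLB1}--\eqref{eq:CLB4} on a partition of the $\theta$-axis. The only cosmetic difference is that you organize the case split via the endpoint $\theta_0$ of the flat piece of $\Lambda_\beta$ (and accordingly break the strong-control range $\theta\ge\bar\theta$ into three pieces), whereas the paper phrases the same split simply as $\Lambda_\beta(\theta-c)=\beta$ versus $\Lambda_\beta(\theta-c)>\beta$; the underlying logic is identical.
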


%\newpage

\begin{figure}
	\includegraphics{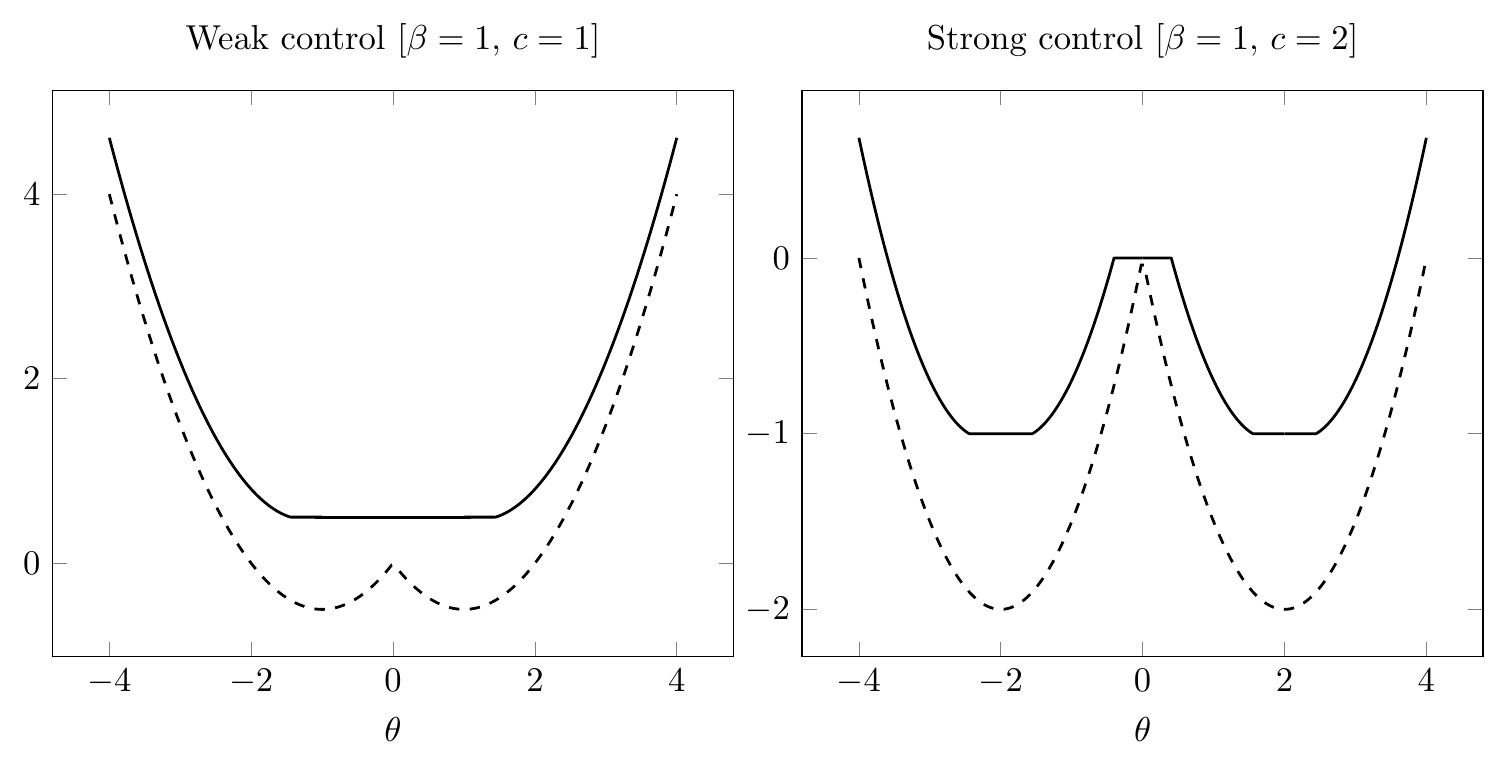}
	\caption{Representative graphs of $\frac1{2}\theta^2 - c|\theta|$ (dashed) and $\overline H_{\beta,c}(\theta)$ (solid) against $\theta$ in each of the two control regimes. There is weak control if and only if $\beta\ge c^2/2$.}
	\label{matrixfigure}
\end{figure}

\begin{proof}
	We prove the existence of the limit in \eqref{eq:liman} separately in each of the two control regimes.
	
	\vspace{3mm}

	(a)\ (Weak control) Assume that $\beta \ge c^2/2$. By symmetry, it suffices to consider $\theta\ge0$.
	\begin{itemize}
		\item If $0\le\theta\le c$, then the bounds \eqref{eq:CUB2} and \eqref{eq:CLB1} match,
		%\begin{equation}\label{rahkoc1}
		\[\overline H_{\beta,c}^L(\theta) = \overline H_{\beta,c}^U(\theta) = \beta - \frac1{2}c^2,\]
		%\end{equation}
		and taking the infimum in \eqref{eq:liman} over the set $\{\aeL:\,0<h<h_0,\ y>y_0\}$ for any $h_0>0$ and $y_0>0$ does not change the limit.
	
	    \item If $\theta \ge 0$ and $\Lambda_\beta(\theta-c) = \beta$, then the bounds \eqref{eq:CUB1} and \eqref{eq:CLB1} match,
		%\begin{equation}\label{rahkoc2} 
		\[\overline H_{\beta,c}^L(\theta) = \overline H_{\beta,c}^U(\theta) = \beta - \frac1{2}c^2,\]
		%\end{equation}
		and $\aleft$ is asymptotically optimal.
		\item If $\theta > c$ and $\Lambda_\beta(\theta-c) > \beta$, then the bounds \eqref{eq:CUB1} and \eqref{eq:CLB2} match,
		%\begin{equation}\label{rahkoc3}
		\[\overline H_{\beta,c}^L(\theta) = \overline H_{\beta,c}^U(\theta) = \Lambda_\beta(\theta-c) - \frac1{2}c^2,\]
		%\end{equation}
		and $\aleft$ is asymptotically optimal.
	%The analogous results follow from symmetry for $\theta<0$.
    \end{itemize}
    
    \vspace{3mm}
	
	(b)\ (Strong control) Assume that $\beta < c^2/2$. By symmetry, it suffices to consider $\theta\ge0$. Recall from Section \ref{subsec:LBLB} that there exists a unique $\bar\theta(\beta,c)\in(0,c)$ such that $\Lambda_\beta(\bar\theta(\beta,c)-c) = c^2/2$.
	\begin{itemize}
	\item If $0\le\theta < \bar\theta(\beta,c)$, then the bounds \eqref{eq:CUB2}, \eqref{eq:CLB3} and \eqref{eq:CLB4} match,
	%\begin{equation}\label{sunkrc1} 
	\[\overline H_{\beta,c}^L(\theta) = \overline H_{\beta,c}^U(\theta) = 0,\]
	%\end{equation}
	and taking the infimum in \eqref{eq:liman} over the set $\{\aeL:\,0<h<h_0,\ y>y_0\}$ for any $h_0>0$ and $y_0>0$ does not change the limit.
	\item If $\theta \ge 0$ and $\Lambda_\beta(\theta-c) = \beta$, then the bounds \eqref{eq:CUB1} and \eqref{eq:CLB1} match,
	%\begin{equation}\label{sunkrc2} 
	\[\overline H_{\beta,c}^L(\theta) = \overline H_{\beta,c}^U(\theta) = \beta - \frac1{2}c^2,\]
	%\end{equation}
	and $\aleft$ is asymptotically optimal.
	\item If $\theta \ge \bar\theta(\beta,c)$ and $\Lambda_\beta(\theta-c) > \beta$, then the bounds \eqref{eq:CUB1} and \eqref{eq:CLB2} match,
	%\begin{equation}\label{sunkrc3} 
	\[\overline H_{\beta,c}^L(\theta) = \overline H_{\beta,c}^U(\theta) = \Lambda_\beta(\theta-c) - \frac1{2}c^2,\]
	%\end{equation}
	and $\aleft$ is asymptotically optimal.\qedhere
	\end{itemize}
	%\end{itemize}
\end{proof}

\section{Homogenization}\label{sec:homogen}

In this section, we first state (and give references to) existence and uniqueness results for viscosity solutions to the Cauchy problems we have introduced in Section \ref{subsec:main}, then provide the control representation \eqref{eq:protoconrep}-\eqref{eq:controlrep} for $v_\theta$ that we have mentioned there, and finally use \eqref{eq:controlrep} to prove our homogenization results (Theorem \ref{thm:main} and Corollary \ref{cor:main}). We assume throughout the section that $V$ satisfies \eqref{ass:wlog}, \eqref{ass:vreg} and \eqref{ass:vh}. Since we work directly with the control representation rather than the equations themselves, we omit the definition of a viscosity solution and refer the reader to \cite{CIL92,FleSon06}.

\subsection{Existence and uniqueness}\label{subsec:eu}

The following results are well known in very general settings (see, for instance, \cite{CIL92, CL86}). We state them here exactly in the form we need and give precise references for the convenience of the reader.

\begin{lemma}\label{lem:euviscous}
	For every $g\in\text{UC}(\mathbb{R})$, $\epsilon>0$ and $\omega\in\Omega$, the viscous Hamilton-Jacobi equation \eqref{eq} subject to the initial condition $g$ has a unique viscosity solution $u^\epsilon_g$ in $\text{UC}([0,\infty)\times\mathbb{R})$.
\end{lemma}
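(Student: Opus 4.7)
The plan is to verify that the Cauchy problem \eqref{eq}--\eqref{ic} falls within the standard framework of the theory of viscosity solutions for second-order parabolic PDEs, as developed in \cite{CIL92} and \cite{CL86}, and then cite the corresponding existence/uniqueness theorems. The main structural features to check are: (i) joint continuity of $H_{\beta,c}(p,T_x\omega)$ in $(p,x)$; (ii) at most quadratic growth in $p$ with coefficients bounded uniformly in $x$; (iii) a modulus of continuity of $H_{\beta,c}$ in $x$ that is uniform on compact sets of $p$; and (iv) nondegeneracy of the diffusion term, which here is immediate since $\epsilon/2$ is a positive constant.

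For existence, I would apply Perron's method in the form given in \cite[Section 4]{CIL92}. The ingredients are a pair of sub/super-solution barriers forcing the initial condition and providing growth control. Using $0\le V\le 1$, $c\ge 0$ and the UC regularity of $g$, one builds barriers of the form
\[
g_\pm(t,x) = g(x_0) \pm \bigl(\rho_g(|x-x_0|) + K_\epsilon t\bigr)
\]
for appropriate constants $K_\epsilon$ and the modulus of continuity $\rho_g$ of $g$; the quadratic growth of $H_{\beta,c}$ in $p$ is handled since $g\in\mathrm{UC}(\mathbb{R})$ yields linear-in-$x$ bounds on any candidate solution. Perron's method then produces a viscosity solution, and uniform continuity on $[0,\infty)\times\mathbb{R}$ follows from these barriers together with the comparison principle (see, e.g., \cite[Section V.2]{FleSon06}).

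For uniqueness, I would invoke a comparison principle in the class of uniformly continuous sub/super-solutions. The key structural hypothesis to verify is that $H_{\beta,c}(p,T_x\omega)$ is Lipschitz (hence uniformly continuous) in $x$, locally uniformly in $p$; this is exactly what \eqref{ass:vreg} provides, since $x\mapsto V(T_x\omega)\in C_b^1(\mathbb{R})$ has bounded derivative (with a constant that may depend on $\omega$ but not on $x$), while the only $x$-dependence in $H_{\beta,c}$ is through $\beta V(T_{x/\epsilon}\omega)$. Together with the nondegenerate constant diffusion, the doubling-of-variables argument of \cite[Theorem 8.2]{CIL92} applies and yields the comparison principle for sub/super-solutions in $\mathrm{UC}([0,\infty)\times\mathbb{R})$, from which uniqueness follows.

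The only mild obstacle is the quadratic growth of $H_{\beta,c}$ in $p$, which would force one to be careful about the growth class in which comparison is performed were the initial data unrestricted; however, since $g\in\mathrm{UC}(\mathbb{R})$ has at most linear growth, the barriers above confine the solution to a class in which standard comparison applies without modification. Thus the lemma is essentially a packaging of well-known results tailored to the specific $H_{\beta,c}$ at hand.
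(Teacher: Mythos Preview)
Your overall strategy---verify structural hypotheses on $H_{\beta,c}$ and then invoke standard existence/uniqueness theory---is exactly what the paper does, but the paper is more economical: it checks three explicit conditions, namely the two-sided quadratic bound $\tfrac14 p^2 - c^2 \le H_{\beta,c}(p,T_x\omega) \le \tfrac12 p^2 + \beta$, Lipschitz continuity of $x\mapsto H_{\beta,c}(p,T_x\omega)$ (from \eqref{ass:vreg}), and the local Lipschitz estimate $|H_{\beta,c}(p,T_x\omega)-H_{\beta,c}(q,T_x\omega)|\le(\tfrac12(|p|+|q|)+c)|p-q|$, and then cites \cite[Theorem~2.8]{DK17}, which packages Perron's method and comparison under precisely these hypotheses. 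Your plan instead goes back to the primary sources \cite{CIL92,CL86} and sketches the construction directly; that is a legitimate alternative, but it forces you to confront technicalities that \cite{DK17} has already absorbed.

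One such technicality is not quite handled in your sketch: the barriers $g_\pm(t,x)=g(x_0)\pm(\rho_g(|x-x_0|)+K_\epsilon t)$ are problematic when $g$ is merely uniformly continuous but not Lipschitz, because $\rho_g'$ can blow up near $0$ and the quadratic term $\tfrac12(\rho_g')^2$ in $H_{\beta,c}$ then cannot be dominated by any fixed $K_\epsilon$. The standard fix (approximate $g$ by Lipschitz data, or use barriers of the form $g(x_0)\pm(A|x-x_0|+Bt)$ combined with a separate short-time estimate) is routine but should be acknowledged; this is part of what citing \cite[Theorem~2.8]{DK17} buys you. Similarly, \cite[Theorem~8.2]{CIL92} is stated for bounded domains, so applying comparison on $(0,\infty)\times\mathbb{R}$ with a superlinear Hamiltonian requires an additional growth/cutoff argument that you allude to but do not carry out.
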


\begin{proof}
	For every $\omega\in\Omega$, the Hamiltonian $H_{\beta,c}(p,T_x\omega) = \frac12p^2 - c|p| + \beta V(T_x\omega)$ has the following three properties. First,
	\begin{equation}\label{eq:coercive}
	\frac1{4}p^2 - c^2 \le \frac12p^2 - c|p| \le H_{\beta,c}(p,T_x\omega) \le \frac12 p^2 + \beta
	\end{equation}
	for every $(p,x)\in\mathbb{R}\times\mathbb{R}$ since $0 \le (\frac12|p| -c)^2 = \frac1{4}p^2 + c^2 - c|p|$ and $V(T_x\omega)\in [0,1]$ by \ref{ass:wlog}. Second, \begin{equation}\label{eq:ikinci}
	\text{$x\mapsto H_{\beta,c}(p,T_x\omega)$ is Lipschitz continuous}
	\end{equation}
	by \eqref{ass:vreg}. Third,
	\begin{equation}\label{eq:ucuncu}
	|H_{\beta,c}(p,T_x\omega) - H_{\beta,c}(q,T_x\omega)| = \left|\frac12(|p| + |q|) - c\right| ||p| - |q|| \le \left(\frac12(|p| + |q|) + c\right)|p-q|
	\end{equation}
	for every $p,q\in\mathbb{R}$. The desired result follows from \cite[Theorem 2.8]{DK17}.
\end{proof}

\begin{lemma}\label{lem:euinviscid}
	For every $g\in\text{UC}(\mathbb{R})$, the inviscid Hamilton-Jacobi equation \eqref{eqh} subject to the initial condition $g$ has a unique viscosity solution $\overline{u}_g$ in $\text{UC}([0,\infty)\times\mathbb{R})$.
\end{lemma}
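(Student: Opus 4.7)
The plan is to verify that the effective Hamiltonian $\overline H_{\beta,c}$ is continuous and coercive, and then invoke the classical existence and uniqueness theory for first-order Hamilton-Jacobi equations with a spatially homogeneous Hamiltonian.

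My first step is to establish continuity of $\overline H_{\beta,c}$. Theorem \ref{thm:control} presents $\overline H_{\beta,c}(\theta)$ as a piecewise function: it equals a constant (either $\beta - \frac12 c^2$ in the weak-control regime or $0$ in the strong-control regime) on a central symmetric interval and equals $\Lambda_\beta(|\theta|-c) - \frac12 c^2$ outside this interval. Continuity of $\Lambda_\beta$ follows from Proposition \ref{prop:list}(a) (as a convex function on $\mathbb{R}$). At the interface, the pieces match continuously: in the weak-control regime, this follows at $|\theta| = c$ since $\Lambda_\beta$ is identically $\beta$ on a neighborhood of $0$ by Proposition \ref{prop:list}(c); in the strong-control regime, continuity at $|\theta| = \bar\theta(\beta,c)$ is built into the definition of $\bar\theta(\beta,c)$ via the identity $\Lambda_\beta(\bar\theta(\beta,c)-c) = c^2/2$.

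My second step is to verify coercivity, i.e.\ $\overline H_{\beta,c}(\theta) \to\infty$ as $|\theta|\to\infty$. By Proposition \ref{prop:list}(b), $\Lambda_\beta(\theta) \ge \theta^2/2$, so for $|\theta|$ sufficiently large we have $\overline H_{\beta,c}(\theta) \ge \frac12 (|\theta|-c)^2 - \frac12 c^2$, which yields the desired growth at infinity.

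With continuity and coercivity of the $x$- and $u$-independent Hamiltonian $p\mapsto \overline H_{\beta,c}(p)$ established, I would invoke the standard Crandall--Lions theory of viscosity solutions. Existence and uniqueness of a $\text{UC}([0,\infty)\times\mathbb{R})$ solution for UC initial data are classical results; references include \cite{CL86,CIL92} as well as \cite{FleSon06}. Uniqueness is a direct consequence of the comparison principle, whose proof via doubling of variables simplifies significantly in the present $x$- and $u$-independent setting, while existence can be obtained by Perron's method using, e.g., the linear sub- and supersolutions of the form $(t,x)\mapsto \theta x + t\,\overline H_{\beta,c}(\theta)$ combined with the uniform continuity of $g$; an alternative is to pass to the vanishing-viscosity limit of the regularized equation $\partial_t u^\delta = \frac{\delta}{2}\partial_x^2 u^\delta + \overline H_{\beta,c}(\partial_x u^\delta)$, whose solutions exist and are uniformly continuous by the same machinery used in Lemma \ref{lem:euviscous}. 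I do not foresee any substantive obstacle: the lemma is a routine specialization of well-known theory to the particular continuous, coercive (but possibly nonconvex) Hamiltonian $\overline H_{\beta,c}$ constructed in Theorem \ref{thm:control}.
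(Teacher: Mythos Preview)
Your proposal is correct and follows essentially the same route as the paper: verify that $\overline H_{\beta,c}$ is continuous and coercive, then invoke off-the-shelf existence and uniqueness theory for first-order Hamilton-Jacobi equations with spatially homogeneous Hamiltonian. The paper's proof is terser---it records the two-sided quadratic bound $\tfrac14 p^2 - c^2 \le \overline H_{\beta,c}(p) \le \tfrac12 p^2 + \beta$ and then cites \cite[Theorem~2.5]{DK17} directly rather than the classical references you name---but the substance is the same.
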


\begin{proof}
	It is clear from \eqref{eq:weaklimit}-\eqref{eq:stronglimit} that the Hamiltonian $\overline H_{\beta,c}(p)$ is locally uniformly continuous in $p$. Moreover,
	\[\frac1{4}p^2 - c^2 \le \frac12p^2 - c|p| \le \overline H_{\beta,c}(p) \le \Lambda_\beta(p) \le \frac12 p^2 + \beta\]
	for every $p\in\mathbb{R}$. Here, the first inequality is as in \eqref{eq:coercive}, the second inequality is easy to check by taking $\beta=0$ (see also Figure \ref{matrixfigure}), the third inequality holds since $\alpha\equiv 0\in\cP_c$, and the last inequality is part of Proposition \ref{prop:list}(b). The desired result follows from \cite[Theorem 2.5]{DK17}.
\end{proof}

\subsection{Control representation}\label{subsec:kontem}

For every $\theta\in\mathbb{R}$, $\epsilon>0$, $(t,x)\in[0,\infty)\times\mathbb{R}$ and $\omega\in\Omega$, we will write $u_\theta^\epsilon(t,x,\omega)$ to denote $u_g^\epsilon(t,x,\omega)$ (see Lemma \ref{lem:euviscous}) when $g(x) = \theta x$. Note that
\begin{equation}\label{eq:bezm}
u^\epsilon_\theta(t,x,\omega)= \epsilon u^1_\theta(t/\epsilon,x/\epsilon,\omega)
\end{equation}
by the uniqueness part of Lemma \ref{lem:euviscous}. With this notation, we define
\begin{equation}\label{eq:protoconrep}
\begin{aligned}
v_\theta(t,x,\omega) &= e^{u^1_\theta(t,x,\omega)}\quad\text{and}\\
\tilde v_\theta(t,x,\omega) &= \inf_{\alpha\in\mathcal{P}_c} E_x\left[e^{\beta \int_{0}^t V(T_{X_s^\alpha}\omega)ds + \theta X_t^\alpha}\right].
\end{aligned}
\end{equation}

\begin{lemma}\label{lem:curl}
For every $\theta\in\mathbb{R}$, $(t,x)\in[0,\infty)\times\mathbb{R}$ and $\omega\in\Omega$,
\begin{equation}\label{eq:controlrep}
v_\theta(t,x,\omega) = \tilde v_\theta(t,x,\omega).
\end{equation}
\end{lemma}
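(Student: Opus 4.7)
The plan is to establish the identity through the standard verification-theorem argument from stochastic optimal control, using the same bang-bang feedback device already exploited in Section \ref{sec:UB}. A Hopf--Cole chain-rule computation on \eqref{eq} at $\epsilon = 1$ indicates that $v_\theta = e^{u^1_\theta}$ should solve the semilinear parabolic Cauchy problem
\begin{equation*}
\frac{\d v}{\d t} = \frac12 \frac{\d^2 v}{\d x^2} - c\left|\frac{\d v}{\d x}\right| + \beta V(T_x \omega)\,v, \qquad v(0,x) = e^{\theta x}, \tag{$\star$}
\end{equation*}
and interior parabolic regularity, available thanks to \eqref{ass:vreg} and the Lipschitz character of $p \mapsto -c|p|$, should make $v_\theta$ smooth enough in the interior of $(0,\infty)\times\mathbb{R}$ to justify an application of It\^o's formula.

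For an arbitrary $\alpha \in \cP_c$, I would apply It\^o's formula to the process
$N_s^\alpha := v_\theta(t-s, X_s^\alpha, \omega)\,\exp\!\bigl(\beta \int_0^s V(T_{X_r^\alpha}\omega)\,dr\bigr)$ on $s \in [0,t]$.
Using $(\star)$, the drift of $dN_s^\alpha/N_s^\alpha$ collapses to $(c|v_{\theta,x}| + \alpha_s v_{\theta,x})/v_\theta$ evaluated along the trajectory, which is nonnegative because $|\alpha_s| \le c$. Hence $N^\alpha$ is a positive local submartingale. After a standard localization, using the Gaussian tails of $X^\alpha$ (whose drift is uniformly bounded by $c$) to dominate the exponential growth of $v_\theta$ in $x$, taking expectations gives
\[
v_\theta(t,x,\omega) = N_0^\alpha \le E_x[N_t^\alpha] = E_x\!\left[e^{\theta X_t^\alpha + \beta \int_0^t V(T_{X_r^\alpha}\omega)\,dr}\right],
\]
whence $v_\theta \le \tilde v_\theta$ upon taking the infimum over $\alpha \in \cP_c$.

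For the matching lower bound I would use the optimal feedback policy $\alpha_s^* := -c\,\sign\bigl(v_{\theta,x}(t-s, X_s^{\alpha^*}, \omega)\bigr)$, which --- exactly as for $\alpha^{(x_*,h,y)}$ in Section \ref{subsec:BB} --- admits a unique strong solution of \eqref{eq:SDEmiz} by Zvonkin's theorem and hence lies in $\cP_c^{BB}$. For this control $\alpha_s^* v_{\theta,x} = -c|v_{\theta,x}|$, so the drift in the It\^o decomposition vanishes, $N^{\alpha^*}$ becomes a local martingale, and the same expectation identity now gives $v_\theta(t,x,\omega) = E_x[N_t^{\alpha^*}] \ge \tilde v_\theta(t,x,\omega)$. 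Combined with the previous paragraph, this yields the claimed equality.

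The main obstacle is the regularity required to make the It\^o calculation rigorous: because the nonlinearity $p \mapsto -c|p|$ in $(\star)$ fails to be differentiable at $p=0$, classical Schauder estimates do not directly produce $v_\theta \in C^{1,2}$. Two possible remedies are (i) to work in the Sobolev scale $W^{2,p}_{\mathrm{loc}}$ together with the It\^o--Krylov extension of It\^o's formula, or (ii) to bypass smoothness entirely by observing that $\log \tilde v_\theta$ is itself a viscosity solution of \eqref{eq} (with $\epsilon = 1$) in $\mathrm{UC}([0,\infty)\times\mathbb{R})$ --- via the dynamic programming principle for $\tilde v_\theta$ combined with the fact that the inverse Hopf--Cole transform preserves viscosity solutions of such semilinear parabolic equations --- and then invoking the uniqueness statement of Lemma \ref{lem:euviscous} to conclude $u^1_\theta = \log \tilde v_\theta$.
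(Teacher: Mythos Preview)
Your primary verification approach is different from the paper, which takes precisely your alternative (ii): it shows that $\tilde v_\theta$ is a viscosity solution of the Bellman equation \eqref{eq:Bellman}, deduces via the inverse Hopf--Cole transform that $\log\tilde v_\theta$ is a viscosity solution of \eqref{eq} at $\epsilon=1$ with initial datum $\theta x$, and then invokes the uniqueness in Lemma~\ref{lem:euviscous}. Two details your sketch of (ii) omits but the paper supplies: because the terminal payoff $e^{\theta x}$ is unbounded, the paper first uses Girsanov to absorb it into the running cost before citing the relevant dynamic-programming result from \cite{FleSon06}; and because uniqueness in Lemma~\ref{lem:euviscous} is stated in $\mathrm{UC}([0,\infty)\times\mathbb{R})$, a separate Lipschitz estimate (Lemma~\ref{lem:lip}) is needed to place $\log\tilde v_\theta$ in that class.

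Your verification route is natural and would be more self-contained if the regularity could be settled; but making the It\^o--Krylov argument rigorous (your option (i)) requires establishing $W^{2,p}_{\mathrm{loc}}$ regularity for the semilinear problem $(\star)$ with its non-$C^1$ nonlinearity, and constructing the feedback $\alpha^*$ via Zvonkin's theorem when the zero set of $v_{\theta,x}(t-s,\cdot)$ may be far more complicated than the single point appearing in $\alpha^{(x_*,h,y)}$. Both steps are plausible in one dimension but involve real work that the viscosity-uniqueness route sidesteps entirely.
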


\begin{proof}
$\tilde v_\theta(t,x,\omega)$ is the value function of a finite horizon, risk-sensitive stochastic optimal control problem with running payoff function $x\mapsto\beta V(T_x\omega)$ and terminal payoff function $x\mapsto e^{\theta x}$. See \cite[Chapter 6]{FleSon06} for background. As we now show, $\tilde v_\theta$ is a viscosity solution of the associated Bellman equation
\begin{equation}\label{eq:Bellman}
\begin{aligned}
\frac{\partial v}{\partial t} &= \frac1{2}\frac{\partial^2 v}{\partial x^2} + \inf_{|a|\le c}\left(a\frac{\partial v}{\partial x}\right) + \beta V(T_x\omega)v\\
&= \frac1{2}\frac{\partial^2 v}{\partial x^2} - c\left|\frac{\partial v}{\partial x}\right| + \beta V(T_x\omega)v,\quad (t,x)\in(0,\infty)\times\mathbb{R}.
\end{aligned}
\end{equation}
Indeed, when $\theta = 0$ (i.e.\ the terminal payoff function is identically equal to $1$), the last statement follows directly from \cite[Chapter 6, Theorem 8.1]{FleSon06} which is applicable by our assumptions \eqref{ass:wlog} and \eqref{ass:vreg}. On the other hand, when $\theta\ne0$, we can absorb the terminal payoff into the running payoff as follows. For every $\alpha\in\mathcal{P}_c$,
\[E_x\left[e^{\beta \int_{0}^t V(T_{X_s^\alpha}\omega)ds + \theta X_t^\alpha}\right] = E_x\left[e^{\int_{0}^t [\beta V(T_{X_s^{\theta+\alpha}}\omega) + \theta\alpha_s + \frac1{2}\theta^2]ds}\right]e^{\theta x}\]
by Girsanov's theorem. Here, $(\theta + \alpha)_s = \theta + \alpha_s$. Therefore,
\begin{align*}
\tilde v_\theta(t,x,\omega) &= \inf_{\alpha\in\mathcal{P}_c} E_x\left[e^{\int_{0}^t [\beta V(T_{X_s^{\theta + \alpha}}\omega) + \theta\alpha_s + \frac1{2}\theta^2]ds}\right]e^{\theta x}\\
&= \inf_{\alpha\in\mathcal{P}[\theta-c,\theta+c]} E_x\left[e^{\int_{0}^t [\beta V(T_{X_s^\alpha}\omega) + \theta\alpha_s - \frac1{2}\theta^2]ds}\right]e^{\theta x},
\end{align*}
where
\[{\cP}[a,b]=\{\alpha=(\alpha_s)_{s\ge 0}: \ \alpha\ \text{is $[a,b]$-valued and $\mathcal {G}_s$-progressively measurable}\}.\]
Now \cite[Chapter 6, Theorem 8.1]{FleSon06} is applicable and $\tilde v_\theta(t,x,\omega)e^{-\theta x}$ is a viscosity solution of
\begin{equation}\label{eq:davkur}
\frac{\partial v}{\partial t} = \frac1{2}\frac{\partial^2 v}{\partial x^2} + \inf_{|a-\theta|\le c}\left(a\frac{\partial v}{\partial x} + a\theta v\right) + \left(\beta V(T_x\omega) - \frac12 \theta^2\right)v.
\end{equation}
Expressing the derivatives of $\tilde v_\theta(t,x,\omega)$ in terms of those of $\tilde v_\theta(t,x,\omega) e^{-\theta x}$, we deduce that $\tilde v_\theta$ is a viscosity solution of \eqref{eq:Bellman}.

Applying the inverse Hopf-Cole transformation to \eqref{eq:davkur} (see \cite[Chapter 6, Corollary 8.1]{FleSon06}) and arranging the terms, we see that $\log \tilde v_\theta(t,x,\omega) - \theta x$ is a viscosity solution of
\begin{equation}\label{eq:elmadiyen}
\frac{\d u}{\d t}=\frac12\frac{\d^2 u}{\d x^2}+H_{\beta,c}\left(\theta + \frac{\d u}{\d x},T_x\omega\right),\quad (t,x)\in(0,\infty)\times\mathbb{R}.
\end{equation}
Hence, $\log \tilde v_\theta$ is a viscosity solution of \eqref{eq} with $\epsilon = 1$ and $\log \tilde v_\theta(0,x,\omega) = \theta x$. By Lemma \ref{lem:lip} (see below) and the uniqueness part of Lemma \ref{lem:euviscous}, we deduce that $u_\theta^1 = \log \tilde v_\theta$. Consequently, $v_\theta = e^{u_\theta^1} = \tilde v_\theta$.
\end{proof}

\begin{lemma}\label{lem:lip}
	There exists a constant $\kappa$ depending only on $\beta$, $c$, $\theta$ and the Lipschitz constant of $x\mapsto V(T_x\omega)$ such that
	\[|\log \tilde v_\theta(s,x,\omega) - \log \tilde v_\theta(t,y,\omega)| \le \kappa (|s-t| + |x-y|)\] for all $s,t\ge0$, $x,y\in\mathbb{R}$ and $\omega\in\Omega$.
\end{lemma}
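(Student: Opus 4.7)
The plan is to establish spatial and temporal Lipschitz estimates on $u:=\log\tilde v_\theta$ separately, each with a constant depending only on $\beta$, $c$, $\theta$ and the Lipschitz constant $L$ of $x\mapsto V(T_x\omega)$ (uniform in $\omega$ by \eqref{ass:vreg}). Translating the driving BM and using stationarity gives the identity $\tilde v_\theta(t,x,\omega)=e^{\theta x}\tilde v_\theta(t,0,T_x\omega)$, which reduces the spatial variation of $u$ to the dependence on shifts of the environment. The coarse coupling (same control $\alpha$ from the two initial points, keeping displacement $y-x$ throughout) yields only $|u(t,x,\omega)-u(t,y,\omega)|\le(|\theta|+\beta Lt)|x-y|$, a constant growing linearly in $t$ and hence insufficient by itself.

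To promote this to a $t$-uniform bound I would exploit the parabolic HJ equation satisfied by $u$ (the Hopf–Cole transform of \eqref{eq:Bellman}),
\[
u_t=\tfrac12 u_{xx}+\tfrac12 u_x^2-c|u_x|+\beta V(T_x\omega),\qquad u(0,x)=\theta x,
\]
together with the stationary correctors $F_{\beta,\theta\pm c}$ of Section \ref{subsec:cor}. By Lemma \ref{lem:simlaz}, the shifted derivatives $(\theta\pm c)+(F_{\beta,\theta\pm c})'(\omega,x)$ are pointwise bounded by $\sqrt{2\lambda_o(\beta,\theta\pm c)}$ \emph{independently of $t,x$ and $\omega$}, and the (sub/super)martingale identities \eqref{eq:martan}, \eqref{eq:submartan} furnish two-sided control on $\tilde v_\theta(t,x,\omega)$ by expressions of the form $e^{\theta x+F_{\beta,\theta\pm c}(\omega,x)+(\Lambda_\beta(\theta\pm c)-c^2/2)t}$, up to a sublinear error coming from Lemma \ref{lem:sublin}. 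A sup/inf-convolution regularisation argument on the PDE level would then transfer this $C^0$ proximity to a pointwise gradient bound $\|u_x(t,\cdot)\|_\infty\le\kappa_1$, exploiting the coercivity of $p\mapsto \tfrac12p^2-c|p|$ to convert $C^0$-closeness into $C^1$-closeness. For $\theta$ in a flat-piece regime of $\overline H_{\beta,c}$ (where the correctors $F_{\beta,\theta\pm c}$ are unavailable), the explicit upper and lower barriers $\theta x\pm Kt$ from Sections \ref{sec:UB}–\ref{sec:LB} play the same role, the barriers having derivative exactly $\theta$. Once $\kappa_1$ is in hand, the PDE yields $|u_t|\le\tfrac12|u_{xx}|+\tfrac12\kappa_1^2+c\kappa_1+\beta$, and interior parabolic regularity for the coercive HJ equation with Lipschitz forcing bounds $\|u_{xx}\|_\infty$ in terms of $\kappa_1$ and $\beta L$; this gives $\|u_t\|_\infty\le\kappa_2$, and the triangle inequality concludes the lemma with $\kappa=\max(\kappa_1,\kappa_2)$.

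The hard part is the $t$-uniform spatial Lipschitz bound. A direct Bernstein analysis of $w=(u_x-\theta)^2$ at an interior maximum discards the coercive term $u_xu_{xx}$ (since $w_x=0$ forces $u_x u_{xx}=0$ whenever $u_x\neq 0$) and reproduces the linear-in-$t$ growth; the same is true of the synchronous coupling estimate, where $\beta L|x-y|$ accumulates over the full integration horizon. The corrector-comparison strengthening circumvents this but is delicate: the correctors are only sublinear in $x$ (Lemma \ref{lem:sublin}), so a function-value sandwich does not automatically upgrade to a gradient bound without using the coercive structure of the Hamiltonian, and the flat-piece regime of $\overline H_{\beta,c}$ requires a separate barrier construction based on Section \ref{sec:UB}. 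Making these steps rigorous—in particular, the $C^0\!\to\!C^1$ transfer for viscosity solutions of the nonsmooth HJ equation—is the main technical work of the proof.
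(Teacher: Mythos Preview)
Your proposal has a genuine gap in the spatial Lipschitz step. The two-sided control you hope to extract from \eqref{eq:martan} and \eqref{eq:submartan} is not of the form needed: those identities bound expectations that contain the extra factor $e^{F_{\beta,\theta\pm c}(\omega,X_t^\alpha)}$, not $\tilde v_\theta$ itself, and stripping that factor costs an error of size $\sup_{|y|\le at}|F_{\beta,\theta\pm c}(\omega,y)|=o(t)$ by Lemma~\ref{lem:sublin}. A $C^0$ sandwich with $o(t)$ (or $o(|x|)$) slack carries no gradient information whatsoever---for instance $\epsilon t\sin(x/\epsilon^2)$ is $o(t)$-close to zero while its $x$-derivative is of order $t/\epsilon$. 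Coercivity does convert \emph{uniform} $C^0$ bounds into Lipschitz bounds (this is the Ishii--Lions mechanism you allude to), but not sublinear ones. The correctors are moreover only defined off the flat pieces, and the submartingale inequality is one-sided, so even assembling the sandwich is not straightforward. Your temporal step has a separate problem: with merely Lipschitz $V$ the solution need not be $C^2$ in $x$, so a pointwise bound on $u_{xx}$ is not available and cannot be fed back into the equation to control $u_t$.

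The paper's proof is essentially a citation. Since $w(t,x):=\log\tilde v_\theta(t,x,\omega)-\theta x$ is a viscosity solution of \eqref{eq:elmadiyen} with zero initial data, and the shifted Hamiltonian $(p,x)\mapsto H_{\beta,c}(\theta+p,T_x\omega)$ is coercive in $p$ uniformly in $x$ and Lipschitz in $x$ by \eqref{ass:vreg}, the global space--time Lipschitz estimate is read off from \cite[Theorem~3.2]{D16}. The machinery behind that reference is the standard weak-Bernstein / doubling-of-variables argument for coercive viscous HJ equations with Lipschitz initial data; coercivity is exploited directly at the PDE level via comparison, not through any asymptotic homogenization objects. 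Your instinct that coercivity is the decisive ingredient is correct, but the route to it is through the equation itself, not through the correctors.
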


\begin{proof}
	Recall from the proof of Lemma \ref{lem:curl} that $\log \tilde v_\theta(t,x,\omega) - \theta x$ is a viscosity solution of \eqref{eq:elmadiyen} and $\log \tilde v_\theta(0,x,\omega) - \theta x = 0$. It follows from \cite[Theorem 3.2]{D16} that $\log \tilde v_\theta(t,x,\omega) - \theta x$ is Lipschitz continuous in $t\in[0,\infty)$ and $x\in\mathbb{R}$ with a Lipschitz constant $\kappa'$ that depends only on $\beta,c,\theta$ and the Lipschitz constant of $x\mapsto V(T_x\omega)$ (which is finite by \eqref{ass:vreg}). This implies the desired result.
\end{proof}

\begin{remark}
	If $x\mapsto V(T_x\omega)$ is in $C_b^2(\mathbb{R})$, then $u_\theta^\epsilon$ is in fact a classical solution of the Cauchy problem \eqref{eq}-\eqref{ic} with $g(x) = \theta x$. See \cite[Chapter 6, Remark 8.1]{FleSon06}.
\end{remark}

\subsection{Proofs of the homogenization results}

We are finally ready to combine our results on the effective Hamiltonian (see Section \ref{subsec:tfe} for $c=0$ and Section \ref{subsec:EffH} for $c>0$) with the control representation of $v_\theta$ (see Section \ref{subsec:kontem}). %of the solution of the Cauchy problem in \eqref{eq}-\eqref{ic} when $g(x) = \theta x$.

\begin{proof}[Proof of Theorem \ref{thm:main}]
	Recall from the proof of Lemma \ref{lem:euviscous} that $H_{\beta,c}$ satisfies \eqref{eq:coercive}, \eqref{eq:ikinci} and \eqref{eq:ucuncu}.
	For every $\theta\in\mathbb{R}$ and $\mathbb{P}$-a.e.\ $\omega$,
    \[\lim_{\epsilon\to0}u_\theta^\epsilon(1,0,\omega) = \lim_{\epsilon\to0}\epsilon u^1_\theta(1/\epsilon,0,\omega) = \lim_{t\to\infty}\inf_{\alpha\in\mathcal{P}_c}\frac1{t}\log E_0\left[e^{\beta \int_{0}^t V(T_{X_s^\alpha}\omega)ds + \theta X_t^\alpha}\right] = \overline H_{\beta,c}(\theta)\]
    by \eqref{eq:bezm} and Lemma \ref{lem:curl}. The last equality is shown in Theorem \ref{thm:tiltedfe} (resp.\ Theorem \ref{thm:control}) for $c=0$ (resp.\ $c>0$).
    The desired result follows from \cite[Lemma 4.1]{DK17} where the remaining condition (4.2) there holds by Lemma \ref{lem:lip}.
\end{proof}

%
%

%\subsection{Homogenization with uniformly continuous initial data}

%This generalization does not require any additional effort on our part.

\begin{proof}[Proof of Corollary \ref{cor:main}]
	Recall from the proof of Lemma \ref{lem:euviscous} that $H_{\beta,c}$ satisfies \eqref{eq:coercive}, \eqref{eq:ikinci} and \eqref{eq:ucuncu}. The desired result follows from \cite[Theorem 3.1]{DK17} where the remaining condition (L) there holds by Lemma \ref{lem:lip}.
\end{proof}

\appendices

\section{Some properties of the tilted free energy}\label{app:prop}

\begin{proof}[Proof of Proposition \ref{prop:list}]
	%\begin{itemize}
		(a) These three properties follow from $V(\cdot)\ge0$, the symmetry of the law of BM and a standard application of H\"older's inequality, respectively.
		
		(b) Since $V(\cdot)\in [0,1]$,
		\[\frac1{2}\theta^2 = \frac1{t}\log E_0\left[e^{\theta X_t}\right] \le \frac1{t}\log E_0\left[e^{\beta\int_{0}^tV(T_{X_s}\omega)ds + \theta X_t}\right] \le \frac1{t}\log E_0\left[e^{\beta t + \theta X_t}\right] = \beta + \frac1{2}\theta^2.\]
		Sending $t\to\infty$ and recalling from Theorem \ref{thm:tiltedfe} that $\Lambda_\beta(\theta) \ge \beta$, we obtain the desired bounds.
		
		(c) For every $h\in(0,1)$,
		\begin{align*}
		v_\beta^\beta(\omega,0;1) &= E_0\left[e^{\beta\int_{0}^{\tau_1}[V(T_{X_s}\omega) -1]ds}\right]\\
		&\le E_0\left[e^{\beta(h-1)L(\tau_1,[0,1])}\right]\one_{\{[0,1]\ \text{is an $h$-valley}\}} + \one_{\{[0,1]\ \text{is not an $h$-valley}\}}.
		\end{align*}
		Here, $L(t,[a,b]) = \int_0^t \one_{\{X_s\in[a,b]\}}ds$ denotes the occupation time of BM on the interval $[a,b]$ up to time $t$. Since $P_0(L(\tau_1,[0,1]) > 0) = 1$, we have $E_0\left[e^{\beta(h-1)L(\tau_1,[0,1])}\right] < 1$. Therefore, $\mathbb{E}[\log v_\beta^\beta(\cdot,0;1)] < 0$ by \eqref{ass:vh}. Take any sufficiently small $\theta\ge0$ so that \begin{equation}\label{eq:klst}
		\mathbb{E}[\log v_\beta^\beta(\cdot,0;1)] + \theta < 0.
		\end{equation}
		Recall from the proof of Lemma \ref{lem:dombr} (with $\lambda = \beta$) that
		\begin{equation}
		e^{t(\Lambda_\beta(\theta) - \beta) + o(t)} = E_0\left[e^{\beta\int_0^tV(T_{X_s}\omega)ds + \theta X_t}\right]e^{ - t\beta} \le \sum_{k=0}^\infty e^{\theta + \sum_{j=0}^{k-1}[\log v_\beta^\beta(T_j\omega,0;1) + \theta]}.\nonumber%\label{eq:cozzer}
		\end{equation}
		The RHS is a convergent series since the exponent in the kth term grows linearly in $k$ (with a negative sign) by \eqref{eq:klst} and the Birkhoff ergodic theorem. We deduce that $\Lambda_\beta(\theta)\le\beta$ and conclude by appealing to parts (a) and (b).
		
		(d) Recall from \eqref{eq:defF} that
		\[F_{\beta,\theta}^\lambda(\omega,1) = -\log v_\beta^\lambda(\omega,0;1) - \theta = -\log E_0\left[e^{\beta\int_{0}^{\tau_1}V(T_{X_s}\omega)ds - \lambda\tau_1}\right] - \theta\]
		for every $\omega\in\Omega$, $\theta>0$ and $\lambda\ge\beta$. Since $V(\cdot)\in [0,1]$, it follows from the dominated convergence theorem (DCT) that the function $\lambda\mapsto  F_{\beta,\theta}^\lambda(\omega,1)$ is differentiable for $\lambda>\beta$. By a second application of the DCT, we deduce that the function $\lambda\mapsto  \mathbb{E}[F_{\beta,\theta}^\lambda(\cdot,1)]$ is differentiable and
		%\begin{equation}\label{ustdest}
		\[\frac{\partial}{\partial\lambda}\mathbb{E}[F_{\beta,\theta}^\lambda(\cdot,1)] = \mathbb{E}\left[\frac{E_0\left[\tau_1e^{\beta\int_0^{\tau_1}V(T_{X_s}\omega)ds - \lambda\tau_1}\right]}{E_0\left[e^{\beta\int_0^{\tau_1}V(T_{X_s}\omega)ds - \lambda\tau_1}\right]}\right]>0.\]
		%\end{equation}
		Using the DCT for a third time, we see that $\lambda\mapsto  \mathbb{E}[F_{\beta,\theta}^\lambda(\cdot,1)]$ is continuously differentiable.
		The function $\theta\mapsto\mathbb{E}[F_{\beta,\theta}^\lambda(\cdot,1)]$ is linear, and hence continuously differentiable, too. 
		Recall from Lemma \ref{lem:minzer} and Theorem \ref{thm:tiltedfe} that
		%\begin{equation}\label{eq-star14}
		\[0 = \mathbb{E}[F_{\beta,\theta}(\cdot,1)] = \mathbb{E}[F_{\beta,\theta}^{\lambda_o(\beta,\theta)}(\cdot,1)] = \mathbb{E}[F_{\beta,\theta}^{\Lambda_\beta(\theta)}(\cdot,1)]\]
		%\end{equation}
		for $\theta>0$ and $\Lambda_\beta(\theta) > \beta$.
		Thus, by the implicit function theorem, the function $\theta\mapsto\Lambda_\beta(\theta)$ is continuously differentiable on the set $\{\theta\in\mathbb{R}:\,\theta>0\ \text{and}\ \Lambda_\beta(\theta)>\beta\}$. Since $\Lambda_\beta(-\theta) = \Lambda_\beta(\theta)$ and $\Lambda_\beta(0) = \beta$ by parts (a) and (c), this concludes the proof.\qedhere
	%\end{itemize}
\end{proof}

\bibliographystyle{alpha}
\bibliography{controlled_BM_ref}

\end{document}